
\documentclass{birkjour}

\usepackage[utf8]{inputenc}
\usepackage{amsmath}
\setcounter{MaxMatrixCols}{11}
\usepackage[english]{babel}
\usepackage{bbm}
\usepackage{graphicx}
\usepackage[colorinlistoftodos]{todonotes}
\usepackage[colorlinks=true, allcolors=blue]{hyperref}
\usepackage{amssymb}
\usepackage{amssymb}
\usepackage{amsthm}
\usepackage{amsfonts}
\usepackage{amssymb}
\usepackage{amscd}
\usepackage{latexsym}
\usepackage{breqn}
%
%
%
 \newtheorem{thm}{Theorem}[section]
 \newtheorem{cor}[thm]{Corollary}
 \newtheorem{lem}[thm]{Lemma}
 \newtheorem{prop}[thm]{Proposition}
 \theoremstyle{definition}
 \newtheorem{defn}[thm]{Definition}
 \theoremstyle{remark}
 \newtheorem{rem}[thm]{Remark}
 \newtheorem*{ex}{Example}
 \numberwithin{equation}{section}

\begin{document}

%
%
%
%
%
%
%
%
%

\title[MTTOs: unbounded symbols, kernels and equivalence after extension]
 {Matrix-valued truncated Toeplitz operators: unbounded symbols, kernels and equivalence after extension}

\author[R. O'Loughlin]{Ryan O'Loughlin}
\address{
School of Mathematics\\
University of Leeds\\
Leeds\\
LS2 9JT\\
U.K.}

\email{matro@leeds.ac.uk}

\subjclass{30H10, 47B35, 47A56, 46E20.}

\keywords{Hardy space, Toeplitz operator, matrix-valued functions, model space.}

\begin{abstract}
This paper studies matrix-valued truncated Toeplitz operators, which are a vectorial generalisation of truncated Toeplitz operators. It is demonstrated that, although there exist matrix-valued truncated Toeplitz operators without a  matrix symbol in $L^p$ for any $p \in (2, \infty ]$, there is a wide class of matrix-valued truncated Toeplitz operators which possess a matrix symbol in $L^p$ for some $p \in (2, \infty ]$. In the case when the matrix-valued truncated Toeplitz operator has a symbol in $L^p$ for some $p \in (2, \infty ]$, an approach is developed which bypasses some of the technical difficulties which arise when dealing with problems concerning matrix-valued truncated Toeplitz operators with unbounded symbols. Using this new approach, two new notable results are obtained. The kernel of the matrix-valued truncated Toeplitz operator is expressed as an isometric image of an $S^*$-invariant subspace. Also, a Toeplitz operator is constructed which is equivalent after extension to the matrix-valued truncated Toeplitz operator. In a different yet overlapping vein, it is also shown that multidimensional analogues of the truncated Wiener-Hopf operators are unitarily equivalent to certain matrix-valued truncated Toeplitz operators.
\end{abstract}

\maketitle

\section{Introduction}
The purpose of this paper is to study the matrix-valued truncated Toeplitz operator (abbreviated to MTTO). The MTTO is a vectorial generalisation of the truncated Toeplitz operator. We make a powerful observation, that when studying a given property of a MTTO it is often convenient to initially modify the MTTO by changing its codomain (in a natural way), then one can deduce results about the MTTO from the modified MTTO. This approach allows us to tackle problems which were previously out of reach concerning MTTOs with unbounded symbols. In particular for a MTTO which has a matrix symbol with each entry lying in $L^p$ for $p \in (2, \infty ]$, we describe the kernel of the MTTO as an isometric image of a $S^*$-invariant subspace and we also find a new form of Toeplitz operator which is equivalent after extension to the MTTO. We emphasise that although the results in this paper are for MTTOs, all the results are new even in the scalar case of the truncated Toeplitz operator. As the study of MTTOs is a recent endeavour, we devote the final section of this paper to discuss some applications of MTTOs to integral equations.

The spaces $H^p$ and $L^p$ will be defined on the unit circle, $\mathbb{T}$, where $1\leqslant p \leqslant \infty$. We write $(H^p)^n$ (respectively $(L^p)^n)$ to mean the column vector of length $n$ with each entry taking values in $H^p$ (respectively $L^p$). Background theory on the classical Hardy space $H^p$ can be found in \cite{duren1970theory, nikolski2002operators}. For $1<p \leqslant \infty$, we denote $L^{( p, n \times n)}$ to be the space of $n$-by-$n$ matrices with each entry taking values in $L^p$. We make an analogous definition for $H^{( p, n \times n)}$. For a matrix $M \in L^{( \infty, n \times n)}$ the adjoint of $M \in L^{( \infty, n \times n)}$ is denoted $M^*$. A $n$-by-$n$ matrix inner function $\Theta$ is an element of $H^{( \infty, n \times n)}$ such that for almost every $z \in \mathbb{T}$, we have $\Theta (z)$ is a unitary matrix. Throughout we use $\Theta$ to denote an $n$-by-$n$ inner function.

For $f \in (H^2)^n$, the backward shift, $S^*$, is defined by $S^*(f) = \frac{f - f(0)}{z}$. We know from the Beurling-Lax Theorem that $\Theta (H^2)^n$ is a shift invariant subspace. Therefore using orthogonality one can see that the (matricial) model space, $K_{\Theta} := \Theta\overline{(H^2_0)^n} \cap (H^2)^n$ is $S^*$-invariant. If we define $$(\overline{H^2_0})^n := \{ \overline{f} : f \in (H^2)^n \text{ and } f(0) \text{ is the zero vector} \},$$ then we have the orthogonal decompositions
$$
(H^2)^n = K_{\Theta} \oplus \Theta (H^2)^n,
$$
and $$
(L^2)^n = (\overline{H^2_0})^n \oplus (H^2)^n.
$$
This in turn gives us orthogonal projections $P_+: (L^2)^n \to (H^2)^n$, $P_{\Theta}: (L^2)^n \to K_{\Theta}$ and $Q_{\Theta} : (L^2)^n \to \Theta (H^2)^n$. For $k > n$ we write $P_n : (H^2)^{k} \to (H^2)^{n} $ to mean the projection onto the first $n$ coordinates.

Matrix-valued truncated Toeplitz operators were first defined in \cite{khan2018matrix} as a natural generalisation of truncated Toeplitz operators. They have further been studied in \cite{khan2020generalized, khan2020note}. We define the MTTO as follows. Let $G \in L^{( 2, n \times n)}$, consider the map 
\begin{equation}\label{MTTO}
    f \mapsto P_{\Theta}(Gf),
\end{equation}
defined on $K_{\Theta} \cap (H^{\infty})^n$. It is shown in Section 4 of \cite{khan2018matrix} that $K_{\Theta} \cap (H^{\infty})^n$ is dense in $K_{\Theta}$, so in the case when \eqref{MTTO} is bounded this uniquely defines an operator $K_{\Theta} \to K_{\Theta}$, which we denote $A_G^{\Theta}$ and call a \textit{matrix-valued truncated Toeplitz operator} (MTTO). We note that with this definition, all MTTOs are implicitly bounded. We call $G$ the symbol of the MTTO, and we note that if we have the additional assumption that $G \in L^{( \infty, n \times n)}$ then \eqref{MTTO} can always be extended to a bounded operator. In the case when $n=1$, we recover the well known bounded truncated Toeplitz operator.

We say $\Theta$ is pure if $||\Theta(0)|| < 1$. Matrix valued truncated Toeplitz operators with a pure inner function appear naturally in the Sz.-Nagy and Foias model theory for Hilbert space contractions. In particular, every bounded linear operator between two Hilbert spaces $T: H_1 \to H_2$ with defect indices $( n , n )$ and with the property that for all $h \in H_1$, $T^{*n} (h) \to 0$ (S.O.T) is unitarily equivalent to $A_z^{\Theta}$ for some $n$-by-$n$ inner function $\Theta$. See Section 2, page 33, of \cite{recentadvances} for a more detailed discussion. Although this is one of the main motivations for interest in the truncated Toeplitz operator (which is relevant when the defect indices are $(1,1)$), there has been very little research done in to the general case of the MTTO.

MTTOs also appear naturally in \cite{multibandspace}, where a dual band Toeplitz operator (defined in \cite{multibandspace}) is unitarily equivalent to a MTTO with a diagonal inner function $\Theta$. Dual band Toeplitz operators have applications in speech processing and signal transmission. We refer the reader to \cite{multibandspace} and further references thereafter for a detailed discussion.

Let $\theta \in H^2$ be a scalar inner function and let $\phi \in H^{\infty}$. We denote the Hankel operator with symbol $g \in L^{\infty}$, by $H_g : H^2 \to \overline{H^2_0}$. This is defined by $H_\psi(p) = P_{-}(\psi p)$, where $P_{-}:L^2 \to \overline{H^2_0}$ is the orthogonal projection. It is well known that many questions about Hankel operators can be phrased in terms of truncated Toeplitz operators with an analytic symbol. In particular the relation
$$
A_{\phi}^{\theta} = \theta H_{\overline{\theta}\phi} |_{K_{\theta}}
$$
has long been exploited. Making natural generalisations so that $\Psi \in H^{( \infty, n \times n)} $ and $H: (H^2)^n \to (\overline{H^2_0})^n$ is a Hankel operator on the vector-valued Hardy space, we can also write the relation
$$
A_{\Psi}^{\Theta} = \Theta H_{{\Theta}^*{\Psi}} |_{K_{\Theta}}.
$$
So, just as is true in the scalar case, the matricial Hankel operator and MTTO are fundamentally linked. This has applications in minimisation problems and Nehari's Theorem, see Section 2.2 of \cite{potapov}.

MTTOs also have a link to complex symmetric operators. Direct sums of truncated Toeplitz operators seem to play a role in some sort of model theory for complex symmetric operators (see Section 9 of \cite{recentprogressGarcia}). However, direct sums of truncated Toeplitz operators are in fact special cases of MTTOs with diagonal symbols and diagonal inner functions. So instead of considering what role direct sums of truncated Toeplitz operators play in the theory of complex symmetric operators, it may be more natural to consider what role MTTOs play in the theory of complex symmetric operators.

In Section \ref{sec:preliminaryresults} we make some key observations which allow us to define the modified MTTO. The modified MTTO turns out to be a crucial tool in later sections, particularly when we are trying to understand properties of MTTOs which do not possess a bounded symbol.

The kernel of a Toeplitz operator is easily checked to be nearly $S^*$-invariant, and this has long been exploited to study the kernels of vector-valued and scalar-valued Toeplitz operators \cite{hitt1988invariant} \cite{MR2651921}. The kernel of a (scalar) truncated Toeplitz operator with a bounded symbol was shown to be nearly $S^*$-invariant with defect 1 in \cite{oloughlin2020nearly}. Then consequently, this property was used to give a decomposition theorem for the kernel of a truncated Toeplitz operator with a bounded symbol. In Section \ref{sec:3.1} we use the modified MTTO to expand on previous studies to include the case where the symbol of the operator is not bounded. In particular, we decompose the kernel of a MTTO which possesses a symbol in $ L^{(p, n \times n )}$ for $p \in ( 2 , \infty)$ in to an isometric image of an $S^*$-invariant subspace.

In Section \ref{sec:4} we use the modified MTTO as a transitional device, which allows us to find an operator which is equivalent after extension to a MTTO which has a symbol in $ L^{(p, n \times n )}$ for $p \in ( 2 , \infty)$. Specifically, we first find a Toeplitz operator which is equivalent after extension to the modified MTTO, and then we change the codomain of this Toeplitz operator (as we have done when defining the modified MTTO) to produce an operator which is equivalent after extension to a MTTO which has a symbol in $ L^{(p, n \times n )}$ for $p \in ( 2 , \infty)$. This is a generalisation of the results in Section 6 of \cite{MR3634513}, where the authors construct a Toeplitz operator which is equivalent after extension to a truncated Toeplitz operator with a symbol in $L^{\infty}$.

In Section \ref{integralequation} we show there is a unitary equivalence between certain MTTOs and matricial truncated Wiener-Hopf operators (which may also be called matricial convolution operators on finite intervals). We show that we can use the theory developed around the modified MTTO to test the continuity of matricial truncated Wiener-Hopf operators. Finally, we show how the matricial truncated Wiener-Hopf operators are naturally encountered when finding the solution to multi input, multi output linear systems.

\section{The modified matrix-valued truncated Toeplitz operator}\label{sec:preliminaryresults}

The Riesz projections $P_{q+} : (L^q)^n \to (H^q)^n$ and $P_{q-} := I - P_{q+} : (L^q)^n \to ( \overline{H^q_0} )^n$ are bounded when $q \in ( 1, \infty )$. Furthermore for $n=1$, $P_{q+}$ can be expressed by 
$$
P_{q+} (f)(z) = \int_{\mathbb{T}} \frac{f (\zeta)}{1 - \overline{\zeta}z} dm( \zeta),
$$
which is independent of $q \in ( 1, \infty)$. Which means we can deduce the following;
\begin{lem}\label{02}
For $q \in ( 1, 2)$ and $f \in (L^2)^n$, we have $P_{q+} (f)  = P_{2+} (f) $ and  $P_{q-} (f)  = P_{2-} (f)$.
\end{lem}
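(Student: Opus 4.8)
The plan is to reduce immediately to the scalar case and then exploit the $q$-independence of the integral formula recorded above. Since the projections $P_{q\pm}$ act entrywise on vectors in $(L^q)^n$, it suffices to establish $P_{q+}(f) = P_{2+}(f)$ for a single scalar $f \in L^2$; the vector statement then follows coordinate by coordinate, and the identity for $P_{q-}$ is immediate from $P_{q-} = I - P_{q+}$ once the statement for $P_{q+}$ is known.

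First I would observe that for $q \in (1,2)$ the inclusion $L^2 \subseteq L^q$ holds, because $m$ is a probability measure and hence $\|g\|_q \leqslant \|g\|_2$; in particular the continuous embedding $L^2 \hookrightarrow L^q$ means that an $L^2$-convergent sequence also converges in $L^q$. Thus for $f \in L^2$ both $P_{q+}(f) \in H^q$ and $P_{2+}(f) \in H^2 \subseteq H^q$ are well defined, and the real content is to compare two objects that a priori live in different spaces; the embedding above is what lets us carry out this comparison inside the single space $H^q$.

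The key step is then the $q$-independence of the Cauchy-type integral. Since the formula $P_{q+}(f)(z) = \int_{\mathbb{T}} f(\zeta)(1 - \overline{\zeta}z)^{-1}\, dm(\zeta)$ depends only on $f$ and not on the index $q$, viewing the fixed function $f \in L^2$ as an element of $L^q$ produces exactly the same analytic function, whence $P_{q+}(f) = P_{2+}(f)$. To make this fully rigorous without appealing to the boundary behaviour of the integral, I would instead argue by density: both $P_{q+}$ and $P_{2+}$ send $z^k$ to $z^k$ for $k \geqslant 0$ and to $0$ for $k < 0$, so they agree on trigonometric polynomials, which are dense in $L^2$. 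Choosing trigonometric polynomials $f_m \to f$ in $L^2$, hence also in $L^q$, and using boundedness of both projections together with the uniqueness of limits in $L^q$, I would conclude $P_{q+}(f) = P_{2+}(f)$ almost everywhere.

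The main obstacle is conceptual rather than computational: one must be careful that $P_{q+}(f)$ and $P_{2+}(f)$ are initially defined as elements of different Banach spaces, so the asserted identity only becomes meaningful after embedding everything into $L^q$. Once the continuous inclusion $L^2 \hookrightarrow L^q$ is in place, the agreement of the two projections on the dense set of trigonometric polynomials closes the argument with no further difficulty.
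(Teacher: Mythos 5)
Your proposal is correct and rests on the same observation the paper uses: the Riesz projection is given by a Cauchy-type integral formula that does not depend on $q$, so for $f\in L^2\subseteq L^q$ the two projections produce the same function. The extra density argument you supply (agreement on trigonometric polynomials plus boundedness of both projections and uniqueness of limits in $L^q$) is a valid and slightly more careful justification of the same fact, not a different route.
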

\noindent Using the projection $ P_{\Theta,q} := P_{q+} \Theta P_{q-} \Theta^* $ we can decompose, as we have done in the case of $q = 2$, $(H^q)^n$ as
$$
(H^q)^n = K_{\Theta}^q \oplus \Theta (H^q)^n,
$$
where $K_{\Theta}^q = \Theta\overline{(H^q_0)^n} \cap (H^q)^n$ 
and $$
(L^q)^n = (\overline{H^q_0})^n \oplus K_{\Theta}^q \oplus \Theta (H^q)^n.
$$
As the orthogonal projection $ P_{\Theta} = P_{\Theta,2} : L^2 \to K_{\Theta}^2$ can also be written $P_{\Theta,2} = P_{2+} \Theta P_{2-} \Theta^*$ we can conclude the following;
\begin{lem}\label{0}
For $q \in (1 , 2) $ and $ f \in (L^2)^n$, we have 
$$
P_{\Theta,2}(f) = P_{\Theta,q}(f).
$$
\end{lem}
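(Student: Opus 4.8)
The plan is to unwind the composition $P_{\Theta,q} = P_{q+} \Theta P_{q-} \Theta^*$ one factor at a time, invoking Lemma \ref{02} at each occurrence of a Riesz projection while verifying that the function to which the projection is applied still lies in $(L^2)^n$. The structural fact I would lean on throughout is that, since $\Theta \in H^{(\infty, n\times n)}$ has bounded entries (indeed $\Theta(z)$ is unitary almost everywhere), multiplication by both $\Theta$ and $\Theta^*$ maps $(L^2)^n$ boundedly into itself. This is what guarantees that the intermediate objects never leave $(L^2)^n$, which is exactly the hypothesis needed to apply Lemma \ref{02}.

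Concretely, I would fix $f \in (L^2)^n$ and first observe that $\Theta^* f \in (L^2)^n$ by boundedness of $\Theta^*$, so Lemma \ref{02} gives $P_{q-}(\Theta^* f) = P_{2-}(\Theta^* f)$. Setting $h := \Theta\, P_{2-}(\Theta^* f)$, we have $P_{2-}(\Theta^* f) \in (\overline{H^2_0})^n \subseteq (L^2)^n$ and hence $h \in (L^2)^n$, so Lemma \ref{02} applies a second time and yields $P_{q+}(h) = P_{2+}(h)$. Chaining these two identities produces
$$
P_{\Theta,q}(f) = P_{q+}\bigl(\Theta\, P_{q-}(\Theta^* f)\bigr) = P_{q+}\bigl(\Theta\, P_{2-}(\Theta^* f)\bigr) = P_{q+}(h) = P_{2+}(h) = P_{\Theta,2}(f),
$$
which is the claimed equality.

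The only point demanding any care — and the nearest thing here to an obstacle — is confirming at each stage that the argument of a $q$-projection genuinely belongs to $(L^2)^n$, since Lemma \ref{02} is a statement about $L^2$ inputs alone; this rests entirely on the $L^\infty$ control of the entries of $\Theta$ and $\Theta^*$. One should also note, for well-definedness, that $(L^2)^n \subseteq (L^q)^n$ on $\mathbb{T}$ when $q \in (1,2)$, so the $q$-projections are in any case meaningful on these functions. Beyond this bookkeeping the argument is a direct substitution, so I anticipate no genuine difficulty.
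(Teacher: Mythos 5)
Your proof is correct and follows exactly the route the paper intends: the paper simply notes that $P_{\Theta,2}$ factors as $P_{2+}\Theta P_{2-}\Theta^*$ and leaves the factor-by-factor application of Lemma \ref{02} implicit, which is precisely the substitution you carry out. Your added care in checking that each intermediate function stays in $(L^2)^n$ via the boundedness of $\Theta$ and $\Theta^*$ is the right (and only) point needing verification.
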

\noindent We can also deduce that for $Q_{\Theta,q}:= \Theta P_{q+} \Theta^* =  P_{q+} - P_{\Theta,q}: (L^q)^n \to \Theta (H^q)^n$ we have the following;
\begin{lem}\label{01}
For $q \in (1 , 2) $ and $ f \in (L^2)^n$, we have 
$$
Q_{\Theta,2}(f) = Q_{\Theta,q}(f).
$$
\end{lem}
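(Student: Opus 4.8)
The plan is to derive this as an immediate consequence of the two preceding lemmas, exploiting the fact that $Q_{\Theta,q}$ admits the decomposition $Q_{\Theta,q} = P_{q+} - P_{\Theta,q}$ recorded in its definition. First I would note that, since we work on $\mathbb{T}$ with normalised Lebesgue measure, the nesting $(L^2)^n \subseteq (L^q)^n$ holds for every $q \in (1,2)$; hence for $f \in (L^2)^n$ the quantity $Q_{\Theta,q}(f)$ is well defined and the statement is meaningful.

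Next, for such $f$ I would simply write
$$
Q_{\Theta,q}(f) = P_{q+}(f) - P_{\Theta,q}(f).
$$
By Lemma \ref{02} we have $P_{q+}(f) = P_{2+}(f)$, and by Lemma \ref{0} we have $P_{\Theta,q}(f) = P_{\Theta,2}(f)$. Substituting these two identities gives
$$
Q_{\Theta,q}(f) = P_{2+}(f) - P_{\Theta,2}(f) = Q_{\Theta,2}(f),
$$
the last equality being the $q = 2$ instance of the same decomposition. This completes the argument.

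Alternatively, one could argue directly from the other expression $Q_{\Theta,q} = \Theta P_{q+}\Theta^*$: since $\Theta \in H^{(\infty, n\times n)}$, multiplication by $\Theta^*$ maps $(L^2)^n$ into itself, so Lemma \ref{02} applies to $\Theta^* f$ and yields $P_{q+}(\Theta^* f) = P_{2+}(\Theta^* f)$; multiplying on the left by $\Theta$ then gives the claim. I do not anticipate any genuine obstacle here: the entire content is already contained in Lemmas \ref{02} and \ref{0}, and the only point requiring a moment's care is the verification that $f \in (L^2)^n$ lies in the domain $(L^q)^n$ of $Q_{\Theta,q}$, which is guaranteed by the $L^q$-nesting on a finite measure space.
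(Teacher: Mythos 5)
Your argument is correct and matches the paper's (implicit) reasoning exactly: the lemma is deduced from the decomposition $Q_{\Theta,q} = P_{q+} - P_{\Theta,q}$ together with Lemmas \ref{02} and \ref{0}, precisely as you do. The alternative route via $\Theta P_{q+}\Theta^*$ is also sound and equally in the spirit of the definitions given just before the statement.
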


\noindent We will use Lemmas \ref{02}, \ref{0} and \ref{01} freely throughout. When we are clearly working in the context of $(L^2)^n$, we will just write $P_{\Theta}$, respectively $Q_{\Theta}$, as opposed to $P_{\Theta,2}$, respectively $Q_{\Theta,2}$.

\vskip 0.5cm

\begin{defn}\label{tildemapdef}
Let $p \in ( 2, \infty ]$, let $G \in L^{( p, n \times n)}$ and let $\frac{1}{2} + \frac{1}{p} = \frac{1}{q} $. Then the bounded operator $\Tilde{ A_G^{\Theta}} : K_{\Theta}^2 \to K_{\Theta}^q$ is defined by $\Tilde{ A_G^{\Theta}}(f) = P_{\Theta,q} (Gf)$. We call the operator $\tilde{A_G^{\Theta}}$ the \textit{modified matrix-valued truncated Toeplitz operator}.
\end{defn}

\begin{rem}
Although $\tilde{A_G^{\Theta}}$ does have a specific $p$ dependence depending on which space $G$ lies in, we will omit this from our notation.
\end{rem}

\noindent The following proposition shows that when $A_G^{\Theta}: K_{\Theta}^2 \to K_{\Theta}^2$ is a MTTO, up to a change in codomain, $A_G^{\Theta}$ and $\Tilde{A_G^{\Theta}}$ are actually the same operator. In the next section we will exploit this link to study the kernel of $ A_G^{\Theta}$.

\begin{prop}\label{sameimg}
Let the assumptions of Definition \ref{tildemapdef} hold and let $A_G^{\Theta}: K_{\Theta}^2 \to K_{\Theta}^2$ be a MTTO. Then for each $f \in K_{\Theta}^2$ we have $\Tilde{ A_G^{\Theta}}(f) =A_G^{\Theta}(f) $.
\end{prop}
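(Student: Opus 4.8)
The plan is to establish the identity first on a dense subset of $K_\Theta^2$ where both operators are given by the same explicit formula, and then to propagate it to all of $K_\Theta^2$ by a continuity argument. The one feature requiring care is that $A_G^\Theta$ and $\tilde{A_G^\Theta}$ have different codomains, namely $K_\Theta^2$ and $K_\Theta^q$ with $q \in (1,2]$, so the comparison must ultimately be carried out in the larger space $(L^q)^n$.

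First I would work on the set $K_\Theta \cap (H^\infty)^n$, which is dense in $K_\Theta^2$ by the result cited from Section 4 of \cite{khan2018matrix}. For $f$ in this set, $f$ is bounded and $G \in L^{(p, n \times n)}$ with $p > 2$, so $Gf \in (L^p)^n \subseteq (L^2)^n$ on the finite-measure circle. By the definition of the MTTO we then have $A_G^\Theta(f) = P_{\Theta,2}(Gf)$, while by Definition \ref{tildemapdef} we have $\tilde{A_G^\Theta}(f) = P_{\Theta,q}(Gf)$. Since $Gf \in (L^2)^n$ and $q \in (1,2)$, Lemma \ref{0} gives $P_{\Theta,2}(Gf) = P_{\Theta,q}(Gf)$, so the two operators coincide on $K_\Theta \cap (H^\infty)^n$. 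When $p = \infty$ one has $q = 2$ and the two projections are literally the same, so that case is immediate.

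To pass to a general $f \in K_\Theta^2$, I would fix a sequence $f_m \in K_\Theta \cap (H^\infty)^n$ with $f_m \to f$ in the $(L^2)^n$ norm. The operator $A_G^\Theta$ is bounded on $K_\Theta^2$ by hypothesis, so $A_G^\Theta(f_m) \to A_G^\Theta(f)$ in $(L^2)^n$; since the inclusion $(L^2)^n \hookrightarrow (L^q)^n$ is continuous for $q \leqslant 2$ on $\mathbb{T}$, this convergence also holds in $(L^q)^n$. On the other hand $\tilde{A_G^\Theta} : K_\Theta^2 \to K_\Theta^q$ is bounded (by Hölder, $\|Gf\|_q \leqslant \|G\|_p \|f\|_2$, followed by boundedness of $P_{\Theta,q}$ on $(L^q)^n$ for $q \in (1,\infty)$), so $\tilde{A_G^\Theta}(f_m) \to \tilde{A_G^\Theta}(f)$ in $(L^q)^n$. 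By the previous paragraph $A_G^\Theta(f_m) = \tilde{A_G^\Theta}(f_m)$ for every $m$, and uniqueness of limits in $(L^q)^n$ then forces $A_G^\Theta(f) = \tilde{A_G^\Theta}(f)$, as required.

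The main obstacle is precisely this mismatch of codomains: one cannot directly invoke the principle that two bounded operators agreeing on a dense set are equal, because the target spaces differ. The resolution is to regard both operators as taking values in the single space $(L^q)^n$ via the continuous embedding of $(L^2)^n$, after which the standard density argument applies. Everything else — the Hölder estimate guaranteeing $Gf \in (L^q)^n$, the boundedness of the projections $P_{\Theta,q}$, and the agreement of $P_{\Theta,2}$ with $P_{\Theta,q}$ on $(L^2)^n$ supplied by Lemma \ref{0} — is routine once this viewpoint is adopted.
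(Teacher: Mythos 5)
Your proposal is correct and follows essentially the same route as the paper: both arguments use Lemma \ref{0} to identify $P_{\Theta,2}(Gf_m)$ with $P_{\Theta,q}(Gf_m)$ on bounded approximants, pass to the limit using the boundedness of both operators together with the fact that $(L^2)^n$-convergence implies $(L^q)^n$-convergence, and conclude by uniqueness of limits in $(L^q)^n$. The only cosmetic difference is that you state the agreement on the dense subset as a separate first step before the limiting argument, whereas the paper folds it into a single computation.
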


\begin{proof}
For a given $f \in K_{\Theta}^2$, let $f_n \in K_{\Theta}^2 \cap (H^{\infty})^n$ be such that $f_n \overset{(L^2)^n}{\to} f$. It is easily checked that $\Tilde{ A_G^{\Theta}}$ is bounded and so we have $P_{\Theta,q}(G f_n) \overset{(L^q)^n}{\to} P_{\Theta,q}(G f)$. By Lemma \ref{0} this means
\begin{equation}\label{star1}
    P_{\Theta,2}(G f_n) \overset{(L^q)^n}{\to} P_{\Theta,q}(G f) = \Tilde{ A_G^{\Theta}}(f).
\end{equation}
Because $P_{\Theta,2}(G f_n) \overset{(L^2)^n}{\to} P_{\Theta,2}(G f) = A_G^{\Theta}(f)$ and convergence in $(L^2)^n$ is stronger than $(L^q)^n$ we must have
\begin{equation}\label{star2}
P_{\Theta,2}(G f_n) \overset{(L^q)^n}{\to} P_{\Theta,2}(G f) = A_G^{\Theta}(f).
\end{equation}
Now comparing \eqref{star1} and \eqref{star2} uniqueness of limits implies that $\Tilde{ A_G^{\Theta}}(f) =A_G^{\Theta}(f) $.
\end{proof}

\begin{cor}\label{containedinL2}
Let the assumptions of Definition \ref{tildemapdef} hold and let $A_G^{\Theta}: K_{\Theta}^2 \to K_{\Theta}^2$ be a MTTO. Then $\text{Im}\Tilde{ A_G^{\Theta}} \subseteq K_{\Theta}^2 $.
\end{cor}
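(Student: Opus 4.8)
The plan is to read the corollary off directly from Proposition \ref{sameimg}, which has already done all the substantive work. First I would fix an arbitrary element of $\operatorname{Im}\tilde{A_G^{\Theta}}$; by definition of the image it has the form $\tilde{A_G^{\Theta}}(f)$ for some $f \in K_{\Theta}^2$. Proposition \ref{sameimg} asserts that $\tilde{A_G^{\Theta}}(f) = A_G^{\Theta}(f)$ for every such $f$. Since we are assuming that $A_G^{\Theta}$ is a MTTO, it is by definition a bounded operator whose codomain is $K_{\Theta}^2$, and hence $A_G^{\Theta}(f) \in K_{\Theta}^2$. Combining these two facts gives $\tilde{A_G^{\Theta}}(f) \in K_{\Theta}^2$, and as $f$ was arbitrary I conclude that $\operatorname{Im}\tilde{A_G^{\Theta}} \subseteq K_{\Theta}^2$.

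The one point worth pausing on is that the statement is a genuine containment rather than a triviality about ambient spaces. Because $p \in (2,\infty]$ forces $\tfrac{1}{q} = \tfrac{1}{2} + \tfrac{1}{p} > \tfrac{1}{2}$, and so $q \in (1,2)$, the target space $K_{\Theta}^q$ strictly contains $K_{\Theta}^2$; a priori the modified operator $\tilde{A_G^{\Theta}}$ only guarantees that its image lands in the larger space $K_{\Theta}^q$. The content of the corollary is therefore that the boundedness of $A_G^{\Theta}$ as an operator into $K_{\Theta}^2$ pulls the image of $\tilde{A_G^{\Theta}}$ back into the smaller $L^2$-based model space, and no fresh analytic estimate is needed — everything is inherited from the pointwise identification of the two operators established in Proposition \ref{sameimg}.

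I do not anticipate any real obstacle here: this is essentially a one-line corollary whose sole ingredient is the preceding proposition. If anything, the only care required is notational, namely to keep track of the fact that $\tilde{A_G^{\Theta}}$ and $A_G^{\Theta}$ have different codomains ($K_{\Theta}^q$ versus $K_{\Theta}^2$) even though they agree as maps defined on $K_{\Theta}^2$. The claim is precisely that the $K_{\Theta}^q$-valued map $\tilde{A_G^{\Theta}}$ in fact never leaves the subspace $K_{\Theta}^2$.
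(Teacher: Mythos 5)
Your argument is correct and is exactly how the paper obtains this corollary: it is stated without separate proof as an immediate consequence of Proposition \ref{sameimg}, since $\tilde{A_G^{\Theta}}(f) = A_G^{\Theta}(f) \in K_{\Theta}^2$ for every $f \in K_{\Theta}^2$. Your additional remark correctly identifies why the containment has content, namely that the nominal codomain $K_{\Theta}^q$ strictly contains $K_{\Theta}^2$.
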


\noindent In fact we have the following;
\begin{prop}\label{boundedmttocondition}
Let the assumptions of Definition \ref{tildemapdef} hold. Then $\text{Im}\Tilde{ A_G^{\Theta}} \subseteq K_{\Theta}^2 $ if and only if $A_G^{\Theta}$ is a MTTO (i.e the map \eqref{MTTO} is bounded). 
\end{prop}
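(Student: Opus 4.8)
The statement is an equivalence, and one implication is already in hand: the direction ``$A_G^{\Theta}$ is a MTTO $\Rightarrow \text{Im}\,\tilde{A_G^{\Theta}} \subseteq K_{\Theta}^2$'' is exactly Corollary \ref{containedinL2}, so nothing new is needed there. The content is the converse, and my plan is to deduce it from the closed graph theorem once the modified operator and the defining map have been matched on a dense subspace.

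First I would record that $\tilde{A_G^{\Theta}}$ is genuinely bounded as a map $K_{\Theta}^2 \to K_{\Theta}^q$, thereby justifying the claim used in the proof of Proposition \ref{sameimg}: for $f \in K_{\Theta}^2$, H\"older's inequality with $\tfrac{1}{q} = \tfrac{1}{2} + \tfrac{1}{p}$ controls $\|Gf\|_q$ by a constant multiple of $\|G\|_p \|f\|_2$, and $P_{\Theta,q}$ is bounded on $(L^q)^n$ since $q \in (1,2) \subset (1,\infty)$. Next I would observe that on the dense subspace $K_{\Theta}^2 \cap (H^{\infty})^n$ the modified operator coincides with the map \eqref{MTTO}: if $f \in (H^{\infty})^n$ then $Gf \in (L^p)^n \subseteq (L^2)^n$, so Lemma \ref{0} permits replacing $P_{\Theta,q}$ by $P_{\Theta,2} = P_{\Theta}$, giving $\tilde{A_G^{\Theta}}(f) = P_{\Theta}(Gf)$. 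Thus \eqref{MTTO} is precisely the restriction of $\tilde{A_G^{\Theta}}$ to this dense set.

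Now assume $\text{Im}\,\tilde{A_G^{\Theta}} \subseteq K_{\Theta}^2$ and regard $\tilde{A_G^{\Theta}}$ as an everywhere-defined linear map $K_{\Theta}^2 \to K_{\Theta}^2$ between Hilbert spaces. To show it is bounded I would verify its graph is closed: suppose $f_m \to f$ in $K_{\Theta}^2$ and $\tilde{A_G^{\Theta}} f_m \to g$ in $K_{\Theta}^2$. Boundedness into $K_{\Theta}^q$ gives $\tilde{A_G^{\Theta}} f_m \to \tilde{A_G^{\Theta}} f$ in $K_{\Theta}^q$; and since convergence in $(L^2)^n$ is stronger than in $(L^q)^n$ (as $q < 2$), the second hypothesis also gives $\tilde{A_G^{\Theta}} f_m \to g$ in $K_{\Theta}^q$. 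Uniqueness of limits in $K_{\Theta}^q$ forces $\tilde{A_G^{\Theta}} f = g$, so the graph is closed and the closed graph theorem yields boundedness $K_{\Theta}^2 \to K_{\Theta}^2$. Restricting this bounded operator to $K_{\Theta}^2 \cap (H^{\infty})^n$, where it equals \eqref{MTTO}, shows \eqref{MTTO} is bounded, i.e. $A_G^{\Theta}$ is a MTTO.

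The only real subtlety, and the step I would be most careful with, is the identification of limits inside the closed graph argument. Everything hinges on the fact that the natural codomain of the modified operator is the larger space $K_{\Theta}^q$ together with the domination of the $q$-norm by the $2$-norm for $q < 2$; this comparison of convergences is exactly what lets the two candidate limits, one computed in $K_{\Theta}^q$ and one pushed down from $K_{\Theta}^2$, be reconciled. I would also check the harmless boundary case $p = \infty$, where $q = 2$ and the claim is essentially immediate.
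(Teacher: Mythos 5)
Your proof is correct and follows essentially the same route as the paper's: the forward direction is quoted from Corollary \ref{containedinL2}, and the converse is obtained by viewing $\tilde{A_G^{\Theta}}$ as a map $K_{\Theta}^2 \to K_{\Theta}^2$ and applying the closed graph theorem, reconciling the two candidate limits in $K_{\Theta}^q$ via the domination of the $q$-norm by the $2$-norm. The extra details you supply (H\"older's inequality for boundedness into $K_{\Theta}^q$, and the explicit check that $\tilde{A_G^{\Theta}}$ agrees with \eqref{MTTO} on $K_{\Theta}^2 \cap (H^{\infty})^n$) are left implicit in the paper but are consistent with it.
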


\begin{proof}
The above corollary shows that when $A_G^{\Theta}: K_{\Theta}^2 \to K_{\Theta}^2$ is a MTTO, then $\text{Im}\Tilde{ A_G^{\Theta}} \subseteq K_{\Theta}^2 $. To show the other implication, we first change the codomain of $\tilde{A_G^{\Theta}}$, to view the map $\tilde{A_G^{\Theta}}: K_{\Theta}^2 \to K_{\Theta}^2$, which is well defined by the assumption $\text{Im}\Tilde{ A_G^{\Theta}} \subseteq K_{\Theta}^2 $. We now use the closed graph theorem to show $\tilde{A_G^{\Theta}}: K_{\Theta}^2 \to K_{\Theta}^2$ is continuous. Let $(f_n)_{n \in \mathbb{N}} \in K_{\Theta}^2$ and let 
$$
( f_n , \tilde{A_G^{\Theta}} (f_n) ) \overset{K_{\Theta}^2 \times K_{\Theta}^2}{\to} (y_1, y_2),
$$
then clearly $f_n \overset{K_{\Theta}^2}{\to} y_1 $ and $\tilde{A_G^{\Theta}}(f_n) \overset{K_{\Theta}^2}{\to} y_2 $. We also know that $\tilde{A_G^{\Theta}}(f_n) \overset{K_{\Theta}^q}{\to} \tilde{A_G^{\Theta}}(y_1) $, and as $L^2$ convergence is stronger than $L^q$ convergence we can say that $\tilde{A_G^{\Theta}}(f_n) \overset{K_{\Theta}^q}{\to} y_2 $. Uniqueness of limits now shows $( f_n , \tilde{A_G^{\Theta}} (f_n) ) \overset{K_{\Theta}^2 \times K_{\Theta}^2}{\to} (y_1, \tilde{A_G^{\Theta}} (y_1 )) $, and hence the graph is closed. Now, again viewing $\tilde{A_G^{\Theta}}: K_{\Theta}^2 \to K_{\Theta}^2$, we have 
$$
\tilde{A_G^{\Theta}}(f) = A_G^{\Theta}(f)
$$
for all $ f \in K_{\Theta}^2 \cap (H^{\infty})^n$. Thus boundedness of $\tilde{A_G^{\Theta}}: K_{\Theta}^2 \to K_{\Theta}^2$ ensures boundedness of \eqref{MTTO}.
\end{proof}
In \cite{baranov2010symbols} the authors give an equivalent condition for a bounded truncated Toeplitz operator to have a bounded symbol. They then go on to describe the (scalar) inner functions, $\theta$, such that every bounded truncated Toeplitz operator on $K_{\theta}^2$ has a bounded symbol. If we consider MTTOs with symbols in $ L^{( p, n \times n)}$ where $p \in ( 2, \infty ]$, the above proposition allows one to describe the set of all symbols of  MTTOs (or when specialised to the scalar case, symbols of all bounded truncated Toeplitz operators). This is given by 
$$
\{ G : P_{\Theta, q} ( G K_{\Theta}^2) \subseteq (L^2)^n \} .
$$

In a similar fashion to how we have changed the codomain of the MTTO to obtain the modified MTTO, we can also change the codomain of the matricial Toeplitz operator. Let $p \in ( 2, \infty ]$ and let $G \in L^{( p, n \times n)}$. Define \begin{equation}\label{calG}
\mathcal{G} =\begin{pmatrix}
{\Theta}^* & 0 \\
G & \Theta \\
\end{pmatrix},
\end{equation}
where $0$ denotes the $n$-by-$n$ matrix with each entry being 0. Throughout all sections, given two Banach spaces $X_1, X_2$ we will equip the space 
$$
\begin{pmatrix}
X_1 \\
X_2
\end{pmatrix} = \left\{ \begin{pmatrix}
f_1 \\
f_2
\end{pmatrix} : f_1 \in X_1, f_2 \in X_2 \right\}
$$
with the norm given by 
$$
\left |\left |\begin{pmatrix}
f_1 \\
f_2
\end{pmatrix} \right |\right| = ||f_1||_{X_1} + || f_2 ||_{X_2}.
$$
With this convention we can define $T_{\mathcal{G}}: \begin{pmatrix}
(H^2)^n \\
(H^q)^n
\end{pmatrix}
\to \begin{pmatrix}
(H^2)^n \\
(H^q)^n
\end{pmatrix}$, where if $ f_1 \in (H^2)^n $ and $f_2 \in (H^q)^n$ (where $\frac{1}{2} + \frac{1}{p} = \frac{1}{q} $),
\begin{equation}\label{T_G}
    \begin{pmatrix}
f_1 \\
f_2
\end{pmatrix} \mapsto 
\begin{pmatrix}
P_{2+}({\Theta}^* f_1) \\
P_{q+}( G f_1 + \Theta f_2 )
\end{pmatrix}.
\end{equation}

\noindent An application of Hölder's inequality shows $T_{\mathcal{G}}$ is bounded. 
\begin{prop}\label{kercoordinates}
For the matrix $\mathcal{G}$ defined as \eqref{calG} we have $P_n (\ker T_{\mathcal{G}}) = \ker \Tilde{A_G^{\Theta}}$.
\end{prop}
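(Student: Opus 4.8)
The plan is to prove the set equality by decoding membership in $\ker T_{\mathcal{G}}$ one coordinate at a time and reading off a chain of equivalences. An element $\begin{pmatrix} f_1 \\ f_2 \end{pmatrix}$ with $f_1 \in (H^2)^n$ and $f_2 \in (H^q)^n$ lies in $\ker T_{\mathcal{G}}$ precisely when the two equations $P_{2+}(\Theta^* f_1) = 0$ and $P_{q+}(Gf_1 + \Theta f_2) = 0$ both hold. Since $P_n$ simply returns $f_1$, the task is to show that the set of first coordinates $f_1$ occurring in such pairs is exactly $\ker \tilde{A_G^{\Theta}}$.

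First I would dispose of the top equation. The condition $P_{2+}(\Theta^* f_1) = 0$ says $\Theta^* f_1 \in (\overline{H^2_0})^n$; writing $\Theta^* f_1 = \overline{g}$ with $g \in (H^2_0)^n$ and using that $\Theta$ is inner gives $f_1 = \Theta\overline{g} \in \Theta\overline{(H^2_0)^n} \cap (H^2)^n = K_{\Theta}^2$, and the converse is immediate. Thus the first coordinate equation is equivalent to $f_1 \in K_{\Theta}^2$, which simultaneously places $f_1$ in the domain of $\tilde{A_G^{\Theta}}$.

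Next I would analyse the bottom equation under the standing hypothesis $f_1 \in K_{\Theta}^2$. Because $\Theta \in H^{(\infty, n \times n)}$ and $f_2 \in (H^q)^n$ we have $\Theta f_2 \in (H^q)^n$, so $P_{q+}(\Theta f_2) = \Theta f_2$ and the equation collapses to $\Theta f_2 = -P_{q+}(Gf_1)$ (note $Gf_1 \in (L^q)^n$ by Hölder, exactly as used for boundedness of $T_{\mathcal{G}}$). Since $\Theta$ is inner this determines $f_2$ uniquely as $f_2 = -\Theta^* P_{q+}(Gf_1)$, and such an $f_2 \in (H^q)^n$ exists if and only if $P_{q+}(Gf_1) \in \Theta(H^q)^n$. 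Invoking the decomposition $(H^q)^n = K_{\Theta}^q \oplus \Theta(H^q)^n$, this membership is equivalent to the vanishing of the $K_{\Theta}^q$-component of $P_{q+}(Gf_1)$, i.e. $P_{\Theta,q}(P_{q+}(Gf_1)) = 0$.

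Finally I would identify this with the kernel of the modified operator. Since $P_{\Theta,q}$ annihilates $(\overline{H^q_0})^n$ and $Gf_1 - P_{q+}(Gf_1) = P_{q-}(Gf_1) \in (\overline{H^q_0})^n$, we obtain $P_{\Theta,q}(P_{q+}(Gf_1)) = P_{\Theta,q}(Gf_1) = \tilde{A_G^{\Theta}}(f_1)$. Hence, for $f_1 \in K_{\Theta}^2$, the bottom equation is solvable for some $f_2 \in (H^q)^n$ exactly when $\tilde{A_G^{\Theta}}(f_1) = 0$; combining with the first step gives $P_n(\ker T_{\mathcal{G}}) = \ker \tilde{A_G^{\Theta}}$. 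I do not expect a deep obstacle here, as the argument is a clean chain of equivalences; the only point genuinely requiring care is the bookkeeping between the $L^2$ and $L^q$ projections, where one must apply the identities of Lemmas \ref{0} and \ref{01} and the $L^q$ decomposition consistently so that the passage from $P_{\Theta,q}(P_{q+}(Gf_1))$ to $\tilde{A_G^{\Theta}}(f_1)$ is justified.
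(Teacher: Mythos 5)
Your proof is correct and follows essentially the same route as the paper's: read off the two coordinate equations of $T_{\mathcal{G}}$, identify the first with $f_1 \in \ker T_{\Theta^*} = K_{\Theta}$, and observe that solvability of the second for some $f_2 \in (H^q)^n$ is exactly the condition $\tilde{A_G^{\Theta}}(f_1) = 0$. The paper states this more tersely; your version just makes explicit the choice $f_2 = -\Theta^* P_{q+}(Gf_1)$ and the bookkeeping with $P_{\Theta,q}$.
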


\begin{proof}
Clearly, for $f_1 \in (H^2)^n$ and $f_2 \in (H^q)^n$, we have $(f_1,f_2) \in \ker T_{\mathcal{G}}$ if and only if
$f_1 \in \ker T_{\Theta^*} = K_{\Theta}$ and ${G} f_1
+ \Theta f_2  \in (\overline{H^q_0})^n.$
So $f_1 \in \ker A^\Theta_G$, and likewise given $f_1 \in \ker A^\Theta_G$ there exist $f_2 \in (H^q)^n$
with $(f_1,f_2) \in \ker T_{\mathcal{G}}$.
\end{proof}

Although the above results are interesting in their own right, our main motivation for introducing the modified MTTO is to study the properties of the MTTOs which do not posses a bounded symbol. 

The condition that we no longer require a bounded symbol to study $A_G^{\Theta}$ is a significant extension to previous studies. This is because there are MTTOs which do not have a bounded symbol but do have a symbol in $L^{(p, n \times n )}$, where $p \in (2, \infty)$. This can be shown in the case where $n=1$ by using Theorem 5.3 in \cite{baranov2010bounded}, which is the following;

\begin{thm}
Suppose $\theta$ is a (scalar) inner function which has an angular derivative (or ADC for short) at $ \zeta \in \mathbb{T}$. Let $p \in ( 2, \infty)$. Then the following are equivalent:
\begin{enumerate}
    \item the bounded truncated Toeplitz operator $k_{\zeta}^{\theta} \otimes k_{\zeta}^{\theta}$ has a symbol $\phi \in L^p$ ;
    \item $k_{\zeta}^{\theta} \in L^p$.
\end{enumerate}

\end{thm}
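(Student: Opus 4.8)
The plan is to treat the two implications separately, using the fact that, because $\theta$ has an angular derivative in the sense of Carathéodory at $\zeta$, the boundary reproducing kernel $k_\zeta^\theta(z) = \frac{1 - \overline{\theta(\zeta)}\theta(z)}{1 - \overline\zeta z}$ lies in $K_\theta$ and satisfies $\langle f, k_\zeta^\theta\rangle = f(\zeta)$ for every $f \in K_\theta$; in particular the rank-one operator $k_\zeta^\theta \otimes k_\zeta^\theta$ acts by $f \mapsto f(\zeta)\,k_\zeta^\theta$. Throughout I work in the scalar case $n=1$, so $\Theta = \theta$ and the model-space projection $P_\theta = P_{2+} - \theta P_{2+}\overline\theta$ extends, by boundedness of the Riesz projection, to a bounded operator on $L^p$ for every $p \in (1,\infty)$; moreover these extensions are consistent across $p$ (they agree on $L^p \cap L^2$), exactly as in Lemma \ref{0}.

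For the implication $(2) \Rightarrow (1)$, I would exhibit an explicit symbol. My candidate is
$$
\phi = k_\zeta^\theta + \overline{k_\zeta^\theta} - 1,
$$
which is real-valued (consistent with $k_\zeta^\theta \otimes k_\zeta^\theta$ being self-adjoint) and manifestly lies in $L^p$ as soon as $k_\zeta^\theta \in L^p$, since the constant $1$ is bounded and $\overline{k_\zeta^\theta} \in L^p$ whenever $k_\zeta^\theta \in L^p$. The substance is to verify that $\phi$ really is a symbol, i.e.\ that $A_\phi^\theta f = f(\zeta)\,k_\zeta^\theta$ for all $f \in K_\theta \cap H^\infty$. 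I would do this by computing $P_\theta(\phi f) = P_\theta(k_\zeta^\theta f) + P_\theta(\overline{k_\zeta^\theta} f) - f$ directly, using the identity $\overline\theta\, k_\zeta^\theta = \frac{\overline\theta - \overline{\theta(\zeta)}}{1 - \overline\zeta z}$ together with the reproducing property to collapse the analytic and co-analytic pieces to the single term $f(\zeta)\,k_\zeta^\theta$. As a consistency check, for $\theta = z^N$ and $\zeta = 1$ this $\phi$ is the Dirichlet kernel $\sum_{j=-(N-1)}^{N-1} z^j$, and a direct Fourier-coefficient computation confirms $P_\theta(\phi f) = f(1)\,k_1^\theta$.

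For the implication $(1) \Rightarrow (2)$, suppose $k_\zeta^\theta \otimes k_\zeta^\theta = A_\phi^\theta$ with $\phi \in L^p$. First fix a function $f \in K_\theta \cap H^\infty$ with $f(\zeta) \neq 0$; such an $f$ exists because point evaluation at $\zeta$ is the bounded, nonzero functional $\langle\,\cdot\,,k_\zeta^\theta\rangle$ on $K_\theta$ (indeed $\langle k_\zeta^\theta, k_\zeta^\theta\rangle = \|k_\zeta^\theta\|^2 > 0$) while $K_\theta \cap H^\infty$ is dense in $K_\theta$. For this $f$ we have the operator identity $f(\zeta)\,k_\zeta^\theta = A_\phi^\theta f = P_\theta(\phi f)$. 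Now $\phi f \in L^p$ since $\phi \in L^p$ and $f \in L^\infty$, and because $P_\theta$ is bounded on $L^p$ the element $P_\theta(\phi f)$ lies in $L^p$; by the consistency of the $L^p$ and $L^2$ projections this $L^p$ element coincides with the $L^2$ value $f(\zeta)\,k_\zeta^\theta$. Dividing by $f(\zeta) \neq 0$ gives $k_\zeta^\theta \in L^p$, as required.

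I expect the main obstacle to be the verification in $(2) \Rightarrow (1)$ that $\phi = k_\zeta^\theta + \overline{k_\zeta^\theta} - 1$ is genuinely a symbol: one must handle $P_\theta(k_\zeta^\theta f)$ and $P_\theta(\overline{k_\zeta^\theta} f)$ when $k_\zeta^\theta$ need not be bounded, carefully tracking which terms the projection annihilates rather than relying on the bounded-symbol calculus. An alternative that sidesteps this computation would be to invoke Sarason's classification of the rank-one truncated Toeplitz operators, which already identifies $k_\zeta^\theta + \overline{k_\zeta^\theta} - 1$ as a symbol of $k_\zeta^\theta \otimes k_\zeta^\theta$; the remaining integrability bookkeeping is then immediate.
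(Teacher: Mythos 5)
First, a point of comparison that is not really one: the paper offers no proof of this statement. It is quoted verbatim as Theorem 5.3 of \cite{baranov2010bounded} and used as a black box to manufacture examples of bounded truncated Toeplitz operators that have a symbol in $L^{p}$ for some $p\in(2,\infty)$ but no bounded symbol. So your argument can only be measured against the cited literature, and by that standard it is correct and follows the standard route. For $(2)\Rightarrow(1)$, your candidate $\phi=k_{\zeta}^{\theta}+\overline{k_{\zeta}^{\theta}}-1$ is precisely the symbol identified in Theorem 5.1(b) of \cite{sarason2007algebraic} (a result this paper itself invokes two paragraphs after stating the theorem), so the computation you only sketch may legitimately be replaced by that citation; the remaining point, that $\phi\in L^{p}$ as soon as $k_{\zeta}^{\theta}\in L^{p}$, is immediate. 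For $(1)\Rightarrow(2)$ your argument is complete and self-contained: a bounded $f\in K_{\theta}\cap H^{\infty}$ with $\langle f,k_{\zeta}^{\theta}\rangle\neq 0$ exists by density of $K_{\theta}\cap H^{\infty}$ and $\|k_{\zeta}^{\theta}\|>0$; then $\phi f\in L^{p}$, and boundedness of the Riesz projection on $L^{p}$ together with the consistency of the $L^{2}$ and $L^{p}$ projections on $L^{p}\subseteq L^{2}$ (the analogue for $p>2$ of Lemmas \ref{02}--\ref{01}) forces $\langle f,k_{\zeta}^{\theta}\rangle\,k_{\zeta}^{\theta}=P_{\theta}(\phi f)\in L^{p}$. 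The only soft spot is that the verification that $\phi$ really is a symbol is an outline rather than a proof --- the identity $\overline{\theta}k_{\zeta}^{\theta}=(\overline{\theta}-\overline{\theta(\zeta)})/(1-\overline{\zeta}z)$ does make the computation go through, but you do not carry it out; since you supply the Sarason reference that closes exactly this step, I would not count it as a gap.
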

\noindent In the above Theorem $k_{\zeta}^{\theta} = \frac{1 - \overline{\theta(\zeta)}\theta (z)}{1 - \overline{\zeta} z} \in K_{\theta}$ is the reproducing kernel at $\zeta$. In particular, the above theorem shows that if $2 < p_1 < p_2 < \infty$ and $k_{\zeta}^{\theta} \in L^{p_1}$ but $k_{\zeta}^{\theta} \notin L^{p_2}$, then $k_{\zeta}^{\theta} \otimes k_{\zeta}^{\theta}$ does not have a bounded symbol but does have a symbol in $L^{p_1}$.

The precise conditions for $k_{\zeta}^{\theta}$ to lie in $L^p$ for $p \in (1, \infty)$ are given in \cite{1} and \cite{13}. In particular, for a Blaschke product with zeros $( a_k )$ we have $k_{\zeta}^{\theta} \in L^p $ if and only if 
\begin{equation}\label{2.5}
    \sum_{k} \frac{1 - |a_k|^2}{|\zeta - a_k|^p} < \infty.
\end{equation}

To obtain a bounded truncated Toeplitz operator which does not have a bounded symbol but does have a symbol in $L^{p_1}$, for some $p_1 \in (2, \infty)$, it is sufficient to have a point $\zeta \in \mathbb{T}$, and a Blaschke product which has an ADC at $\zeta$ such that \eqref{2.5} is true for some $p = p_1 \in (2, \infty)$ but not for some strictly larger value of $p$. An explicit example of this is a Blaschke product with zeros $(a_k)$ accumulating to the point 1 such that 
$$
\sum_{k} \frac{1 - |a_k|^2}{|1 - a_k|^{p_1}} < \infty \hskip 0.4cm \text{   for some $2 <p_1  < \infty $},
$$
but
$$
\sum_{k} \frac{1 - |a_k|^2}{|1 - a_k|^{p_2}} = \infty \hskip 0.4cm \text{   for some $p_1 < p_2 < \infty $} .
$$

Theorem 5.1(b) in \cite{sarason2007algebraic} states that if $\theta$ has an ADC at $\zeta \in \mathbb{T}$, then  $k_{\zeta}^{\theta} \otimes k_{\zeta}^{\theta}$ is a bounded truncated Toeplitz operator. Therefore by Theorem 5.1(b) in \cite{sarason2007algebraic} and the above theorem, we can construct an example of a bounded truncated Toeplitz operator which has a symbol in $L^2$, but does not have a symbol in $L^p$ for any $p \in ( 2 , \infty)$. Similar to our previous example, in order to do this it is sufficient to have a point $\zeta \in \mathbb{T}$ and a Blaschke product with an ADC at $\zeta$ such that \eqref{2.5} is true for $p = 2$ but not for any $p \in ( 2 , \infty )$. A numerical example of such a point $\zeta \in \mathbb{T}$ and  Blaschke product is the Blaschke product with zeros (accumulating to 1) given by $a_k = (1 - \epsilon_k)e^{i \delta_k}$ where $\epsilon_k = \frac{1}{k^2} $ and $\delta_k = \frac{log(k)}{k^{1/2}}$. This observation shows that not every bounded truncated Toeplitz operator has a symbol in $L^p$ for some $p \in (2, \infty)$.

\section{The kernel}\label{sec:3.1}

A closed subspace $M \subseteq (H^2)^n $ is said to be nearly $S^*$-invariant with defect $m$ if and only if there exists a $m$-dimensional subspace $D$ (which may be taken to be orthogonal to $M$) such that if $f \in M$ and $f(0) $ is the zero vector then $S^* f \in M \oplus D$. We call $D$ the defect space. If $M$ is nearly $S^*$-invariant with defect 0 then it is said to be nearly $S^*$-invariant. Similarly, we say a closed subspace $N \subseteq \begin{pmatrix}
(H^2)^n \\
(H^q)^n
\end{pmatrix}$ is nearly $S^*$-invariant if and only if all functions $f \in N$ with the property $f(0)$ is the zero vector satisfy $S^*(f) = \frac{f}{z} \in N$.

In this section we decompose the kernel of a MTTO into an isometric image of an $S^*$-invariant subspace.

\vskip 0.5cm

\noindent Define $W :=  \ker T_{\mathcal{G}} (0) = \{ F(0) : F \in \ker T_{\mathcal{G}} \} \subseteq \mathbb{C}^{2n}$. Let $\dim W = r$, and pick $W_1, \hdots ,W_r \in \ker T_{\mathcal{G}}$ such that $W_1(0), \hdots ,W_r(0)$ are a basis for W.

\begin{prop}\label{generalninv}
The space $P_{n} ( \ker T_{\mathcal{G}})$ is nearly $S^*$-invariant with a defect space
\begin{equation}\label{defect}
    \left( \frac{\text{span} \{ P_n(W_1), \hdots  P_n(W_r) \} }{z} \cap  (H^2)^n \right).
\end{equation}
\end{prop}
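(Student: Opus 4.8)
The plan is to first establish that the full kernel $N := \ker T_{\mathcal{G}} \subseteq \begin{pmatrix} (H^2)^n \\ (H^q)^n \end{pmatrix}$ is nearly $S^*$-invariant (with defect $0$) in the product space, and then to transfer this property down to the projection $P_n(N)$, paying for the projection precisely with the stated defect space. Recall from the proof of Proposition \ref{kercoordinates} that $F = (f_1, f_2) \in N$ exactly when $f_1 \in K_{\Theta}$ and $Gf_1 + \Theta f_2 \in (\overline{H^q_0})^n$. Suppose $F \in N$ with $F(0)$ the zero vector, so that $f_1(0)$ and $f_2(0)$ both vanish and hence $f_1/z \in (H^2)^n$, $f_2/z \in (H^q)^n$. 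Dividing the two defining conditions by $z$ and using that multiplication by $\overline{z}$ preserves both $(\overline{H^2_0})^n$ and $(\overline{H^q_0})^n$, one checks that $f_1/z \in K_{\Theta}$ and $G(f_1/z) + \Theta(f_2/z) = (Gf_1 + \Theta f_2)/z \in (\overline{H^q_0})^n$; thus $S^* F = F/z \in N$.

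The main step is the descent to $P_n(N)$. Let $g \in P_n(N)$ with $g(0)$ the zero vector, and choose a lift $F = (g, f_2) \in N$. The difficulty---and the whole reason a defect appears---is that although $g$ vanishes at $0$, the second coordinate $f_2$ need not, so $F(0) = (0, f_2(0))$ may be nonzero and the near $S^*$-invariance of $N$ cannot be applied to $F$ directly. To remedy this, I would observe that $F(0) \in W$, so we may expand it in the chosen basis, $F(0) = \sum_{i=1}^{r} c_i W_i(0)$ for scalars $c_i$, and set $\tilde{F} := F - \sum_{i=1}^r c_i W_i \in N$. By construction $\tilde F(0)$ is the zero vector, so the first part of the argument gives $\tilde F / z \in N$, whence $P_n(\tilde F / z) \in P_n(N)$.

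It then remains to read off the decomposition of $S^* g$. Since $P_n$ commutes with division by $z$ and $P_n(\tilde F) = g - \sum_i c_i P_n(W_i)$, we obtain
\begin{equation*}
S^* g = \frac{g}{z} = P_n\!\left( \frac{\tilde F}{z} \right) + \frac{1}{z}\sum_{i=1}^{r} c_i P_n(W_i).
\end{equation*}
The first term lies in $P_n(N)$. The second term lies in $\text{span}\{ P_n(W_1), \dots, P_n(W_r) \}/z$, and it lies in $(H^2)^n$ because it equals $g/z - P_n(\tilde F/z)$, a difference of two elements of $(H^2)^n$; hence it belongs to the claimed defect space \eqref{defect}. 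This exhibits $S^* g$ in the sum of $P_n(N)$ with the defect space, which is exactly near $S^*$-invariance with that defect. I expect the only genuine obstacle to be the bookkeeping around the lift just described: one must correct $F$ by an element of $N$ vanishing at $0$ before applying the shift, and it is the failure of the lift to vanish at $0$ that is compensated, after projection, by the finite-dimensional defect space.
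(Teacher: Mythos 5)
Your proof is correct and follows essentially the same route as the paper: lift $g$ to an element of $\ker T_{\mathcal{G}}$, subtract the linear combination of the $W_i$ that kills the value at $0$, apply near $S^*$-invariance of the full kernel, and project back with $P_n$, the correction term landing exactly in the stated defect space. The only difference is that you explicitly verify the near $S^*$-invariance of $\ker T_{\mathcal{G}}$ itself from the two defining conditions, a step the paper takes for granted.
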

\begin{rem}
This may be viewed as a generalisation of Corollary 3.2 in \cite{oloughlin2020nearly}, but the delicate issue here is that we are no longer working with a Hilbert space and so we can not use orthogonality.
\end{rem}

\begin{proof}
Let $f_1 \in P_n ( \ker T_{\mathcal{G}} )$ with $f_1(0)$ equal to the zero vector. Pick $f_2 \in (H^q)^n$ such that $\begin{pmatrix}
f_1 \\
f_2
\end{pmatrix} \in \ker T_{\mathcal{G}}$ and pick constants $ \lambda_1 \hdots \lambda_r $ such that $\begin{pmatrix}
f_1 \\
f_2
\end{pmatrix} - \lambda_1 W_1 - \hdots \lambda_r W_r$ evaluated at $0$ is the zero vector, then
$$
\begin{pmatrix}
f_1 \\
f_2
\end{pmatrix} - \lambda_1 W_1 - \hdots \lambda_r W_r \in \ker T_{\mathcal{G}} \cap z \begin{pmatrix}
(H^2)^n \\
(H^q)^n
\end{pmatrix}. 
$$
Near invariance of $\ker T_{\mathcal{G}}$ now ensures 
$$
\frac{f_1}{z} - \frac{\lambda_1 P_{n}(W_1) - \hdots \lambda_r P_{n}(W_r)}{z} \in P_{n}( \ker T_{\mathcal{G}}),
$$
and therefore 
$$
\frac{f_1}{z} \in P_{n}( \ker T_{\mathcal{G}}) + \left( \frac{\text{span} \{ P_n(W_1), \hdots  P_n(W_r) \} }{z} \cap  (H^2)^n \right).
$$
\end{proof}

Previous results on the kernel of the truncated Toeplitz operator (see \cite{oloughlin2020nearly}, \cite{scalartype} and \cite{MR3634513}) have been under the assumption that the symbol for the operator is bounded. Now using the operator $\Tilde{A_G^{\Theta}}$ as an intermediate tool, this allows us to obtain a Hitt-style characterisation for the kernel of a MTTO and, unlike previous results, we do not require that the symbol of the MTTO is bounded for this characterisation to hold.

\begin{thm}\label{MTTOnear}
Let $p \in (2, \infty ]$, and let $G \in L^{(p, n \times n )}$ be such that $A_G^{\Theta}$ is a MTTO. Then $\ker A_G^{\Theta}$ is nearly $S^*$-invariant with defect $m$, where $m \leqslant n$.
\end{thm}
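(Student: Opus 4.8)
The plan is to transfer the near $S^*$-invariance from the larger kernel $\ker T_{\mathcal{G}}$ down to $\ker A_G^{\Theta}$ using the two structural facts already established. By Proposition \ref{kercoordinates} we have $\ker A_G^{\Theta} = \ker \tilde{A_G^{\Theta}} = P_n(\ker T_{\mathcal{G}})$ (the first equality holding because $A_G^{\Theta}$ and $\tilde{A_G^{\Theta}}$ agree on $K_{\Theta}^2$ by Proposition \ref{sameimg}, so they share a kernel), and by Proposition \ref{generalninv} the space $P_n(\ker T_{\mathcal{G}})$ is nearly $S^*$-invariant with the explicit defect space \eqref{defect}. So the statement is essentially a repackaging: I would first observe that $\ker A_G^{\Theta}$ equals the nearly $S^*$-invariant subspace $P_n(\ker T_{\mathcal{G}})$, and hence is itself nearly $S^*$-invariant with some defect $m = \dim D$, where $D$ is the defect space in \eqref{defect}.

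The real content of the theorem is the quantitative bound $m \leqslant n$. First I would note that the defect space \eqref{defect} is a subspace of
\[
\frac{\operatorname{span}\{P_n(W_1),\dots,P_n(W_r)\}}{z},
\]
which has dimension at most $r = \dim W$, so trivially $m \leqslant r = \dim W$. This alone is not enough, since a priori $W \subseteq \mathbb{C}^{2n}$ could have dimension up to $2n$. The key additional observation is that the intersection with $(H^2)^n$ in \eqref{defect} kills a large chunk: a vector $v/z$ lies in $(H^2)^n$ if and only if $v$ (a constant vector, namely some $P_n(W_j)$-combination) vanishes, i.e. the constant term must be zero. Dividing a constant by $z$ never lands in $H^2$ unless the constant is zero, so in fact I expect the interaction to come through the first-coordinate evaluation map $F \mapsto P_n(F)(0)$ on $\ker T_{\mathcal{G}}$.

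The crux, and the main obstacle, is to bound the dimension of the defect space by $n$ rather than by $2n$. The cleanest route is to examine which elements $F \in \ker T_{\mathcal{G}}$ can contribute to the defect. From the proof of Proposition \ref{kercoordinates}, membership $F = (f_1, f_2) \in \ker T_{\mathcal{G}}$ forces $f_1 \in K_{\Theta}$; elements of $K_{\Theta}$ satisfy $f_1(0) \in \ker(\text{evaluation})$ constraints tied to $\Theta(0)$, and crucially $f_1$ determines $f_2$ up to the ambiguity measured by $\Theta(H^q)^n$. I would therefore analyse the map $W \to \mathbb{C}^n$, $F(0) \mapsto P_n(F)(0) = f_1(0)$, and show that the defect dimension is controlled by the image of this map, which lives in $\mathbb{C}^n$; the combinations of $P_n(W_j)$ whose constant term survives division by $z$ are exactly those with $f_1(0) = 0$, and the count of genuinely independent such contributions after intersecting with $(H^2)^n$ is at most the dimension of $\operatorname{span}\{P_n(W_1)(0), \dots, P_n(W_r)(0)\} \subseteq \mathbb{C}^n$, which is at most $n$. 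Making this dimension-counting argument precise — tracking exactly how the intersection with $(H^2)^n$ and the division by $z$ cut the defect space down to something indexed by first-coordinate constant terms in $\mathbb{C}^n$ — is where the care is needed, and is the step I expect to occupy most of the proof.
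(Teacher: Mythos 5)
Your reduction is exactly the paper's: $\ker A_G^{\Theta} = \ker \tilde{A_G^{\Theta}} = P_n(\ker T_{\mathcal{G}})$ via Propositions \ref{sameimg} and \ref{kercoordinates}, and Proposition \ref{generalninv} gives near $S^*$-invariance with defect space \eqref{defect}; you also correctly isolate the only real work, namely the bound $m \leqslant n$. But your dimension count at that crux is the wrong way round. Write $\psi:\mathbb{C}^r\to\mathbb{C}^n$ for the map $\psi(s)=\sum_j s_j P_n(W_j)(0) = P_n\bigl(\sum_j s_j W_j(0)\bigr)$. The combinations that survive division by $z$ are precisely those with $s\in\ker\psi$, so the defect space is an image of $\ker\psi$ and
$$
m \;\leqslant\; \dim\ker\psi \;=\; r - \dim\operatorname{span}\{P_n(W_1)(0),\dots,P_n(W_r)(0)\},
$$
whereas you bound $m$ by $\dim\operatorname{span}\{P_n(W_j)(0)\}$, i.e.\ by the \emph{image} of $\psi$ rather than its kernel. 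These are complementary quantities, and your bound is actually false: in the paper's own closing example ($\Theta=z^2 I_2$, $G=zI_2$, $n=2$) one computes $\ker T_{\mathcal{G}}=\{(bz,-b)^T: b\in\mathbb{C}^2\}$, so every $P_n(W_j)(0)$ is the zero vector, your bound would force defect $0$, yet the defect is $2$.

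The correct route to $m\leqslant n$ --- and what the paper's proof does, by extending $W_1(0),\dots,W_r(0)$ to a basis of $\mathbb{C}^{2n}$ and passing through the resulting invertible matrix --- is to use the linear independence of $W_1(0),\dots,W_r(0)$ in $\mathbb{C}^{2n}$: this makes $s\mapsto\sum_j s_j W_j(0)$ injective, so $\ker\psi\cong W\cap\ker P_n$, and $\ker P_n\subseteq\mathbb{C}^{2n}$ (the last-$n$-coordinates block) has dimension exactly $n$. In other words the defect is controlled by how much of $W$ is \emph{invisible} to the first $n$ coordinates, not by how much is visible. You do have the right map and the right observation that only the combinations with vanishing first-block constant term contribute; the missing step is to count the kernel of that first-block evaluation and to note that it embeds in an $n$-dimensional space, which is where the linear independence of the $W_j(0)$ must enter.
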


\begin{proof}
From Proposition \ref{sameimg} it is clear that $\ker A_G^{\Theta} =\ker \Tilde{A_G^{\Theta}}$, and Proposition \ref{kercoordinates} shows that $\ker \Tilde{A_G^{\Theta}} = P_{n} (\ker T_{\mathcal{G}})$, so from Proposition \ref{generalninv} we can deduce that $\ker A_G^{\Theta}$ is a nearly invariant subspace with a defect space given by \eqref{defect}. If $r \leqslant n$ it is clear that the dimension of \eqref{defect} is less than or equal to $n$, so it remains to prove that if $r = n + i$ for $i > 0$ then the dimension of \eqref{defect} is at most $n$. Suppose $r = n + i$ for $i > 0$. We form a matrix
$$
[W_1(0),\hdots, W_{n+i}(0)],
$$
then for $\begin{pmatrix}
s_1 \\
\vdots \\
s_{n+i}
\end{pmatrix} \in \mathbb{C}^{n+i}$ we have that $s_1 P_n (W_1) + \hdots s_{n+i} P_{n} (W_{n+i}) \in z (H^2)^n$ if and only if
$$
P_n \left( [W_1(0),\hdots, W_{n+i}(0)] \begin{pmatrix}
s_1 \\
\vdots \\
s_{n+i}
\end{pmatrix} \right)
$$
is the zero vector. Hence the dimension of \eqref{defect} is given by the dimension of 
$$
S = \left\{ \begin{pmatrix}
s_1 \\
\vdots \\
s_{n+i}
\end{pmatrix} \in \mathbb{C}^{n+i} : P_n \left( [W_1(0),\hdots, W_{n+i}(0)] \begin{pmatrix}
s_1 \\
\vdots \\
s_{n+i}
\end{pmatrix} \right) = \begin{pmatrix}
0 \\
\vdots \\
0\\
\end{pmatrix} \right\}.
$$

As $W_1(0), ... W_{n+i}(0) \in \mathbb{C}^{2n}$ are linearly independent, we may pick vectors $$X_1, \hdots X_{n-i}  \in \mathbb{C}^{2n}$$ such that the vectors $W_1(0), \hdots $ $W_{n+i}(0), X_1, \hdots, X_{n-i}$ are linearly independent. We then define $S^{'}$ as
$$
\left\{ \begin{pmatrix}
s_1 \\
\vdots \\
s_{n+i} \\
0 \\
\vdots \\
0
\end{pmatrix} \in \mathbb{C}^{2n} : P_n \left( [W_1(0), \hdots ,W_{n+i}(0), X_1, \hdots, X_{n-i}] \begin{pmatrix}
s_1 \\
\vdots \\
s_{n+i} \\
0 \\
\vdots \\
0
\end{pmatrix} \right) = \begin{pmatrix}
0 \\
\vdots \\
\vdots \\
\vdots \\
0
\end{pmatrix} \right\}.
$$
It is clear $\dim S = \dim S^{'}$, and moreover $S^{'}$ is contained in 
$$
\left\{ [W_1(0), \hdots ,W_{n+i}(0), X_1, \hdots, X_{n-i}]^{-1} V : V \in \mathbb{C}^{2n} \text{and } P_n (V) = 0 \right\},
$$
which has dimension $n$. Thus we can conclude that the dimension of \eqref{defect} is equal to $\dim S = \dim S^{'} \leqslant n$.
\end{proof}

Theorem 3.4 in \cite{oloughlin2020nearly} (which was also independently proved in \cite{chattopadhyay2020invariant}) gives a decomposition for vector-valued nearly $S^*$-invariant subspaces with a defect. So combining the above theorem and Theorem 3.4 in \cite{oloughlin2020nearly} we obtain the following decomposition for the kernels of MTTOs in terms of $S^{*}$-invariant subspaces.

\begin{thm}\label{decompfornear}
Let $p \in (2, \infty ]$, and let $G \in L^{(p, n \times n )}$ be such that $A_G^{\Theta}$ is a MTTO. Let $\{ e_1 , \hdots e_m \}$ be an orthonormal basis for the $m$-dimensional defect space (where $m \leqslant n$) for $\ker A_G^{\Theta}$ given by \eqref{defect} and set $r = \dim \ker A_G^{\Theta} \ominus ( \ker A_G^{\Theta} \cap z (H^2)^n)$. Then \begin{enumerate}
     \item in the case where there are functions in $\ker A_G^{\Theta}$ that do not vanish at $0$,
     $$
     \ker A_G^{\Theta} = \{ F : F(z) = F_0 (z) k_0(z) + z \sum_{j=1}^{m} k_j(z) e_j (z) : ( k_0, \hdots ,k_m) \in K \} ,
     $$
     where $F_0$ is the matrix with each column being an element of an orthonormal basis for $ \ker A_G^{\Theta} \ominus ( \ker A_G^{\Theta} \cap z (H^2)^n)$, $k_0 \in (H^2)^r$, $k_1, \hdots k_m \in H^2$, and $K \subseteq (H^2)^{(r+m)}$ is a closed $S^*$-invariant subspace. Furthermore $ ||F||^2 = \sum_{j=0}^m ||k_j||^2$.
     \item In the case where all functions in $\ker A_G^{\Theta}$ vanish at $0$,
     $$
     \ker A_G^{\Theta} = \{ F : F(z) = z \sum_{j=1}^{m} k_j(z) e_j(z) : (k_1, \hdots, k_m) \in K \},
     $$
     with the same notation as in 1, except that $K$ is now a closed $S^*$-invariant subspace of $(H^2)^m$, and $||F||^2 = \sum_{j=1}^{m} ||k_j||^2$.
 \end{enumerate}
\end{thm}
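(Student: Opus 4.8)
The plan is to deduce the statement as a direct application of the general structure theorem for vector-valued nearly $S^*$-invariant subspaces of finite defect, Theorem 3.4 of \cite{oloughlin2020nearly}, to the particular subspace $M = \ker A_G^{\Theta}$. The substantive analytic work has already been carried out in Theorem \ref{MTTOnear}; what remains is to verify that $M$ satisfies the hypotheses of the structure theorem and then to match notation.

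First I would check the two standing hypotheses. Since $A_G^{\Theta} : K_{\Theta}^2 \to K_{\Theta}^2$ is bounded, its kernel is a closed subspace of $K_{\Theta}^2 \subseteq (H^2)^n$. Theorem \ref{MTTOnear} then asserts that $\ker A_G^{\Theta}$ is nearly $S^*$-invariant with defect $m \leqslant n$, and identifies the defect space explicitly as \eqref{defect}, from which one reads off the orthonormal basis $\{e_1, \ldots, e_m\}$. Thus $M = \ker A_G^{\Theta}$ is a closed, nearly $S^*$-invariant subspace of $(H^2)^n$ with finite defect, exactly the input required by Theorem 3.4 of \cite{oloughlin2020nearly}.

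Next I would apply that theorem verbatim and translate its conclusion into the present notation. Setting $r = \dim\bigl(\ker A_G^{\Theta} \ominus (\ker A_G^{\Theta} \cap z(H^2)^n)\bigr)$, taking the columns of $F_0$ to be an orthonormal basis of $\ker A_G^{\Theta} \ominus (\ker A_G^{\Theta} \cap z(H^2)^n)$, and using the basis $\{e_1, \ldots, e_m\}$ of the defect space, the structure theorem produces a closed $S^*$-invariant subspace $K$ together with the stated representation of each $F \in \ker A_G^{\Theta}$ and the isometric identity $\|F\|^2 = \sum_j \|k_j\|^2$. The two cases are precisely the Hitt-type dichotomy built into Theorem 3.4: when $\ker A_G^{\Theta}$ contains functions not vanishing at $0$ one has $r \geqslant 1$ and the leading term $F_0 k_0$ is present with $K \subseteq (H^2)^{r+m}$, while when every function vanishes at $0$ one has $r = 0$, the term $F_0 k_0$ disappears, and $K \subseteq (H^2)^m$.

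I do not expect a genuine obstacle in this final assembly, as it is essentially a bookkeeping translation. The only points deserving a line of justification are that the defect $m$ of Theorem \ref{MTTOnear} coincides with the defect parameter of Theorem 3.4, and that the wandering dimension $r$ is finite; the latter is automatic since the evaluation map $F \mapsto F(0)$ identifies $\ker A_G^{\Theta} \ominus (\ker A_G^{\Theta} \cap z(H^2)^n)$ with a subspace of $\mathbb{C}^n$, forcing $r \leqslant n$. The genuine content of the result therefore resides in Theorem \ref{MTTOnear} and in the external decomposition theorem, not in their combination.
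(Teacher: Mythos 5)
Your proposal is correct and follows essentially the same route as the paper: the paper likewise derives the result by combining Theorem \ref{MTTOnear} (near $S^*$-invariance of $\ker A_G^{\Theta}$ with defect $m \leqslant n$) with the external decomposition theorem, Theorem 3.4 of \cite{oloughlin2020nearly}, for vector-valued nearly $S^*$-invariant subspaces with finite defect. Your additional checks (closedness of the kernel, finiteness of $r$ via injectivity of evaluation at $0$ on the wandering subspace) are correct and only make explicit what the paper leaves implicit.
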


\begin{rem}
We remark that the above theorem is a generalisation of the results of Section 3 of \cite{oloughlin2020nearly} in two ways. We are now considering the MTTO instead of the scalar truncated Toeplitz operator. We are also now allowing for the MTTO to have a unbounded symbol whereas \cite{oloughlin2020nearly} only considers bounded symbols.
\end{rem}

We now give an example to show that under the conditions of Theorem \ref{MTTOnear}, $n$ is the smallest dimension of defect space for $\ker A_{G}^{\Theta}$, i.e. it is not true that for all inner functions $\Theta$ and symbols $G \in L^{(p, n \times n)}$, that $\ker A_{G}^{\Theta}$ has a $j$-dimensional defect where $j < n$.

\begin{ex}
Let $\Theta = 
\begin{pmatrix}
z^2 & 0 \\
0 & z^2 \\
\end{pmatrix}$, and $G = \begin{pmatrix}
z & 0 \\
0 & z \\
\end{pmatrix}$ then 
$$
\ker A_G^{\Theta} = \left\{ \begin{pmatrix}
\lambda z  \\
\mu z  \\
\end{pmatrix} : \lambda, \mu \in \mathbb{C} \right\},
$$
which is clearly nearly $S^*$-invariant with defect 2.
\end{ex}

\section{Equivalence after extension}\label{sec:4}
For Banach spaces $X, \tilde{X} , Y , \tilde{Y}$ the operators $T: X \to \tilde{X}$ and $S: Y \to \tilde{Y}$ are said to be (algebraically and topologically) equivalent if and only if $T = ESF$, where $E$ and $F$ are invertible operators. More generally $T$ and $S$ are equivalent after extension (abbreviated to EAE) if and only if there exist (possibly trivial) Banach spaces $X_0, Y_0$, called extension spaces and invertible linear operators $E : \tilde{Y} \oplus Y_0 \to \tilde{X} \oplus X_0 $ and $F: X \oplus X_0 \to Y \oplus Y_0$, such that 
$$
\begin{pmatrix}
T & 0 \\
0 & I_{X_0} \\
\end{pmatrix} =  E \begin{pmatrix}
S & 0 \\
0 & I_{Y_0} \\
\end{pmatrix} F .
$$

\noindent In this case we write that $T \overset{*}{\backsim} S$.

\vskip 0.5cm
The relation $\overset{*}{\backsim}$ is an equivalence relation. Operators that are equivalent after extension have many features in common. In particular, using the notation $X \simeq Y$ to say that two Banach spaces $X$ and $Y$ are isomorphic, i.e., that there exists an invertible operator from $X$ onto $Y$ , and the notation $\text{Im} A$
to denote the range of an operator $A$, we have the following.

\begin{thm}[\cite{bartmatricial}]\label{EAEbackgrond}
Let $T: X \to \tilde{X}, S: Y \to \tilde{Y}$ be operators and assume that $T \overset{*}{\backsim} S$. Then
\begin{enumerate}
    \item $\ker T \simeq \ker S$;
    \item $\text{Im }T$ is closed if and only if $\text{Im }S$ is closed and, in that case, $\tilde{X}/\text{Im }T \simeq \tilde{Y}/\text{Im }S$;
    \item if one of the operators $T, S$ is generalised (left, right) invertible, then the other is generalised (left, right) invertible too;
    \item $T$ is Fredholm if and only if $S$ is Fredholm and in that case $\dim \ker T = \dim \ker S$ and $\text{codim } \text{Im }T = \text{codim } \text{Im }S$.
\end{enumerate}
\end{thm}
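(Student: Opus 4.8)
The plan is to unwind the definition of EAE and separate the two effects that it bundles together. First, the \emph{extended} operators $\hat{T} := T \oplus I_{X_0}$ and $\hat{S} := S \oplus I_{Y_0}$ are genuinely equivalent, in the sense that $\hat{T} = E\hat{S}F$ with $E,F$ invertible; second, passing from $T$ to $\hat{T}$ (a direct sum with an identity) alters each of the four properties in a controlled, reversible way, and similarly for $S$ and $\hat{S}$. Each conclusion then follows by composing these two observations. Since $\overset{*}{\backsim}$ is symmetric, in every part it suffices to establish a single implication. Throughout I would stress that we are in Banach spaces, so no use is made of orthogonal complements.

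For (1) I would first note that invertibility of $E$ gives $\ker \hat{T} = F^{-1}(\ker \hat{S})$, whence the restriction of $F$ is a topological isomorphism $\ker \hat{T} \simeq \ker \hat{S}$ (it is bounded, and its inverse is the restriction of $F^{-1}$). Since $\ker \hat{T} = \ker T \oplus \{0\} \simeq \ker T$ and likewise $\ker \hat{S} \simeq \ker S$, this yields $\ker T \simeq \ker S$. For (2), surjectivity of $F$ gives $\text{Im }\hat{T} = E(\text{Im }\hat{S})$, and as $E$ is a homeomorphism, $\text{Im }\hat{T}$ is closed if and only if $\text{Im }\hat{S}$ is; when both are closed, $E$ descends to a Banach-space isomorphism $(\tilde{Y}\oplus Y_0)/\text{Im }\hat{S} \simeq (\tilde{X}\oplus X_0)/\text{Im }\hat{T}$. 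I would then strip off the identity using $\text{Im }\hat{T} = \text{Im }T \oplus X_0$, which is closed exactly when $\text{Im }T$ is and satisfies $(\tilde{X}\oplus X_0)/(\text{Im }T \oplus X_0) \simeq \tilde{X}/\text{Im }T$, and symmetrically for $S$. Chaining these identifications proves both halves of (2).

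Part (4) then drops out as a corollary: by (1), $\dim \ker T = \dim \ker S$, and by (2), $\text{Im }T$ is closed precisely when $\text{Im }S$ is, with $\text{codim Im }T = \dim(\tilde{X}/\text{Im }T) = \dim(\tilde{Y}/\text{Im }S) = \text{codim Im }S$; hence $T$ is Fredholm if and only if $S$ is, with matching nullity and deficiency. For (3), equivalence transports generalised inverses directly: if $\hat{S}^{-}$ is a generalised inverse of $\hat{S}$, then $F^{-1}\hat{S}^{-}E^{-1}$ is one for $\hat{T}$ (and conversely $F\hat{T}^{-}E$ for $\hat{S}$), with the one-sided versions handled identically by conjugation. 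The genuinely delicate point is relating $S$ to $\hat{S} = S \oplus I_{Y_0}$: building up is trivial (take $S^{-}\oplus I_{Y_0}$), but stripping the identity back off cannot assume that a generalised inverse respects the direct-sum splitting.

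I expect this last step to be the main obstacle, and the way I would overcome it is a $2\times 2$ block computation. Writing an arbitrary generalised inverse $R$ of $\hat{S}$ in block form relative to the decompositions $\tilde{Y}\oplus Y_0$ and $Y\oplus Y_0$, the defining identity $\hat{S}R\hat{S} = \hat{S}$ forces the top-left block to satisfy $S R_{11} S = S$, so $R_{11}$ is a generalised inverse of $S$; reading off the analogous top-left blocks of $L\hat{S} = I$ or $\hat{S}R = I$ recovers left, respectively right, inverses of $S$. This elementary reading keeps the entire argument inside Banach-space linear algebra, with no appeal to complementation results beyond what the block identities themselves supply.
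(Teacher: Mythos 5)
Your argument is correct and complete in outline. Note, however, that the paper does not prove this statement at all: it is quoted verbatim as background from the cited reference, so there is no in-paper proof to compare against. Your route --- establish each property for the genuinely equivalent extensions $\hat{T}=T\oplus I_{X_0}=E\hat{S}F$ and $\hat{S}=S\oplus I_{Y_0}$, then strip off the identity summand --- is the standard one from the EAE literature. The only point where a reader might hesitate is exactly the one you flag, namely that a generalised inverse of $S\oplus I_{Y_0}$ need not respect the splitting; your $2\times 2$ block computation ($\hat{S}R\hat{S}=\hat{S}$ forcing $SR_{11}S=S$, and the analogous top-left blocks of $L\hat{S}=I$ and $\hat{S}R=I$ for the one-sided cases) disposes of this correctly, and the remaining identifications ($\ker\hat{T}=\ker T\oplus\{0\}$, $\operatorname{Im}\hat{T}=\operatorname{Im}T\oplus X_0$, quotient isomorphisms via the open mapping theorem) are all routine and valid in the Banach-space setting without complementation assumptions.
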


The above theorem highlights that when one wants to consider invertibility, Fredholmness and spectral properties, EAE extension results are very useful. Section 6 of \cite{MR3634513} shows that a truncated Toeplitz operator with a bounded symbol is EAE to a matricial Toeplitz operator, and then consequently the spectral properties of the truncated Toeplitz operator were studied in \cite{MR3398735}. For $\theta$ a scalar inner function and $g \in L^{\infty}$, the dual truncated Toeplitz operator $D_g^{\theta}: (K_{\theta})^{\perp} \to (K_{\theta})^{\perp}$ is defined by $f \mapsto (Q_{\theta} + P_{-}) (gf)$, where $P_- = I - P_+$. Section 5 of \cite{camara2019invertibility} shows the dual truncated Toeplitz operator is EAE to a paired operator on $(L^2)^2$.

\vskip 1cm
\noindent Throughout this section, unless otherwise stated, we assume that $G \in L^{( p, n \times n)}$ where $p \in (2 , \infty]$. We let $ q \in (1, 2]$ be such that $\frac{1}{2} + \frac{1}{p} = \frac{1}{q}$. In this context, we write $T_G :  (H^2)^n \to (H^q)^n$ to mean the map $f \mapsto P_{q+} (Gf)$.

\vskip 1cm

\noindent In the first part of this section we initially adapt the results in Section 6 of \cite{MR3634513} to show that $T_{\mathcal{G}}$ is EAE to $\Tilde{ A_G^{\Theta}}$. We then build on this result to construct a Toeplitz operator which is EAE to $A_G^{\Theta}$. Unlike the works of \cite{MR3634513} we consider MTTOs which only have unbounded symbols, and in order to overcome the problem of $G$ not being bounded (and then necessarily the domain and codomain of $\Tilde{A_G^{\Theta}}$ being different spaces) one must define a new normed space which mixes $H^p$ and $H^q$ spaces.

Consider the operator $$P_{\Theta,q}G P_{\Theta,2} + Q_{\Theta,2}: (H^2)^n \to K_{\Theta}^q + \Theta (H^2)^n, $$ where here the norm of $k + \Theta f \in K_{\Theta}^q + \Theta (H^2)^n$ is given by $||k||_{(L^q)^n} + ||\Theta f ||_{(L^2)^n}$. We first show that 
\begin{equation}\label{a}
    \Tilde{A_G^{\Theta}} \overset{*}{\backsim} P_{\Theta,q}G P_{\Theta,2} + Q_{\Theta,2}.
\end{equation}

\noindent We have
$$
\begin{pmatrix}
\Tilde{A_G^{\Theta}} & 0 \\
0 & I_{\Theta (H^2)^n} \\
\end{pmatrix} = E_1 \begin{pmatrix}
P_{\Theta,q}G P_{\Theta,2} + Q_{\Theta,2} & 0 \\
0 & I_{ 0 } \\
\end{pmatrix} F_1 ,
$$
where
$$
F_1 : K_{\Theta}^2 \oplus \Theta (H^2)^n \to (H^2)^{n} \oplus \{ 0 \}
$$
is such that
$$
\begin{pmatrix}
k \\
\Theta f \\
\end{pmatrix} \mapsto \begin{pmatrix}
k + \Theta f \\
0 \\
\end{pmatrix} ,
$$
and
$$
E_1 : K_{\Theta}^q + \Theta (H^2)^n \oplus \{ 0 \} \to K_{\Theta}^q \oplus \Theta (H^2)^n 
$$
is such that 
$$
\begin{pmatrix}
k + \Theta f \\
0 \\
\end{pmatrix} \mapsto \begin{pmatrix}
k \\
\Theta f \\
\end{pmatrix}.
$$

\noindent On the other hand it is clear that 
\begin{equation}\label{b}
   P_{\Theta,q}G P_{\Theta,2} + Q_{\Theta,2} \overset{*}{\backsim} \begin{pmatrix}
P_{\Theta,q}G P_{\Theta,2} + Q_{\Theta,2} & 0 \\
0 & P_{q+} \\
\end{pmatrix}.
\end{equation}

\noindent If we denote $I$ to be the identity operator on $ K_{\Theta}^q + \Theta (H^2)^n$, we also have
$$
P_{\Theta,q}G P_{\Theta,2} + Q_{\Theta,2} = ( I - P_{\Theta,q} T_G Q_{\Theta,q})(P_{\Theta,q}T_G + Q_{\Theta,2}).
$$
Furthermore adapting Lemma 6.3 in \cite{MR3634513} we can deduce:

\begin{lem}\label{first}
The operator $ I - P_{\Theta,q} T_G Q_{\Theta,q}:  K_{\Theta}^q + \Theta (H^2)^n \to  K_{\Theta}^q + \Theta (H^2)^n $ is invertible with inverse $  I + P_{\Theta,q} T_G Q_{\Theta,q}$.
\end{lem}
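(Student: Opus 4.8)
The plan is to write $N := P_{\Theta,q} T_G Q_{\Theta,q}$ and reduce the entire statement to the single identity $N^2 = 0$. Once $N$ is known to be a bounded operator on the mixed-norm space $K_{\Theta}^q + \Theta (H^2)^n$ satisfying $N^2 = 0$, the operators $I - N$ and $I + N$ commute, and
\[
(I - N)(I + N) = (I + N)(I - N) = I - N^2 = I,
\]
so $I - N$ is invertible with two-sided inverse $I + N$, which is precisely the assertion. Thus the only genuine content is to verify (i) that $N$ is a well-defined bounded operator from $K_{\Theta}^q + \Theta (H^2)^n$ into itself, and (ii) that $N^2 = 0$; the rest is the purely algebraic line above.

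First I would trace the action of $N$ on a typical element $k + \Theta f$ with $k \in K_{\Theta}^q$ and $f \in (H^2)^n$. Since $Q_{\Theta,q}$ is the projection onto $\Theta (H^q)^n$ in the decomposition $(L^q)^n = (\overline{H^q_0})^n \oplus K_{\Theta}^q \oplus \Theta (H^q)^n$, it annihilates the summand $K_{\Theta}^q$ and fixes $\Theta (H^2)^n \subseteq \Theta (H^q)^n$, so $Q_{\Theta,q}(k + \Theta f) = \Theta f \in (H^2)^n$. This places the argument in the domain of $T_G \colon (H^2)^n \to (H^q)^n$, and Hölder's inequality (with $\tfrac12 + \tfrac1p = \tfrac1q$) gives $G \Theta f \in (L^q)^n$, so $T_G(\Theta f) = P_{q+}(G \Theta f) \in (H^q)^n$. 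Applying $P_{\Theta,q}$ then lands in $K_{\Theta}^q \subseteq K_{\Theta}^q + \Theta (H^2)^n$, and the same Hölder estimate together with boundedness of $P_{q+}$ and $P_{\Theta,q}$ on $(L^q)^n$ yields $\|N(k + \Theta f)\|_{(L^q)^n} \lesssim \|G\|_{(L^p)^{n\times n}} \|\Theta f\|_{(L^2)^n} \leqslant \|k + \Theta f\|$, establishing boundedness and self-mapping of the space.

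The key step is $N^2 = 0$. Expanding gives
\[
N^2 = P_{\Theta,q} T_G \, (Q_{\Theta,q} P_{\Theta,q}) \, T_G Q_{\Theta,q},
\]
so it suffices to observe $Q_{\Theta,q} P_{\Theta,q} = 0$, i.e. that $Q_{\Theta,q}$ vanishes on the range $K_{\Theta}^q$ of $P_{\Theta,q}$. This follows from the direct sum decomposition above, since $Q_{\Theta,q}$ is the projection onto $\Theta (H^q)^n$ along the other two summands; concretely, any $k \in K_{\Theta}^q = \Theta \overline{(H^q_0)^n} \cap (H^q)^n$ has the form $k = \Theta w$ with $w \in (\overline{H^q_0})^n$, whence $Q_{\Theta,q} k = \Theta P_{q+} \Theta^* k = \Theta P_{q+} w = 0$. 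I expect the only delicate point to be the bookkeeping across the two scales $H^2$ and $H^q$, namely confirming that each composition appearing in $N$ and $N^2$ remains within the domains on which $P_{q+}$, $P_{\Theta,q}$ and $Q_{\Theta,q}$ act as in Lemmas \ref{0} and \ref{01}, rather than any analytic difficulty; once $N^2 = 0$ is secured, the conclusion is immediate.
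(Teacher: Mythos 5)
Your proof is correct and follows essentially the same route as the paper, which simply defers to Lemma 6.3 of the cited reference; that lemma is proved exactly by the nilpotency observation $Q_{\Theta}P_{\Theta}=0$, hence $N^2=0$ and $(I-N)(I+N)=I$. Your write-up additionally supplies the ``adaptation'' the paper leaves implicit, namely the H\"older estimate showing $N$ is a bounded self-map of the mixed-norm space $K_{\Theta}^q+\Theta(H^2)^n$, which is the only genuinely new point in this setting.
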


We now mimic the factorisations given in Section 6 of \cite{MR3634513}, however as we are working with a mixed $H^p$-$H^q$ space we must also manage our factorisation in such a way that the domain and codomain in our consecutive factors match up. As the results of \cite{MR3634513} purely deal with operators on $H^2$, this did not have to be considered.

In the following argument for ease of notation we write the domain and co-domain above the operator. For example, if the operator $A: X \to Y$, we will label this as $\overbrace{A}^{X \to Y}$. In the case when $A: X \to X$ we will denote this by $\overbrace{A}^{X}$. With this notation we will omit the specific $q$ or $2$ notation from the projections in the following matrices.

Thus with 
\begin{equation}
T = \begin{pmatrix}
\overbrace{I - P_{\Theta}T_G Q_{\Theta} }^{K_{\Theta}^{q} + \Theta (H^2)^n} & \overbrace{0}^{(H^q)^n \to \{0\}} \\
\overbrace{0}^{K_{\Theta}^q + \Theta (H^2)^n \to \{0\}} & \overbrace{P_+}^{ (H^q)^n} \\
\end{pmatrix},
\end{equation}
we can write
$$
\begin{pmatrix}
\overbrace{P_{\Theta}G P_{\Theta} + Q_{\Theta}}^{(H^2)^n \to K_{\Theta}^q + \Theta (H^2)^n} & \overbrace{0}^{(H^q)^n \to K_{\Theta}^q + \Theta (H^2)^n} \\
\overbrace{0}^{(H^2)^n \to (H^q)^n} & \overbrace{P_+}^{(H^q)^n} \\
\end{pmatrix} =
$$
$$T  \begin{pmatrix}
\overbrace{P_{\Theta} T_G + Q_{\Theta}}^{(H^2)^n \to K_{\Theta}^q + \Theta (H^2)^n} & \overbrace{0}^{(H^q)^n \to K_{\Theta}^q + \Theta (H^2)^n} \\
\overbrace{0}^{(H^2)^n \to (H^q)^n} & \overbrace{P_+}^{(H^q)^n} \\
\end{pmatrix} = 
$$
$$
 T \begin{pmatrix}
\overbrace{T_{\Theta}}^{(H^2)^n \to \Theta (H^2)^n} & \overbrace{P_{\Theta}}^{(H^q)^n \to K_{\Theta}^q} \\
\overbrace{-P_+}^{(H^2)^n} & \overbrace{T_{{\Theta}^*}}^{(H^q)^n} \\
\end{pmatrix} \begin{pmatrix}
\overbrace{T_{{\Theta}^*}}^{(H^2)^n} & \overbrace{0}^{(H^q)^n \to (H^2)^n} \\
\overbrace{{T_G - Q_{\Theta}T_G + Q_{\Theta}P_+}}^{(H^2)^n \to (H^q)^n} & \overbrace{T_{\Theta}}^{(H^q)^n} \\
\end{pmatrix},
$$
where the last line follows by using the identity $P_+ - Q_{\Theta} = P_{\Theta}$ and $T_{{\Theta}^*} P_{\Theta} = 0 $. This can be factorised further to equal
\begin{equation}\label{c}
      T \begin{pmatrix}
\overbrace{T_{\Theta}}^{(H^2)^n \to \Theta (H^2)^n} & \overbrace{P_{\Theta}}^{(H^q)^n \to K_{\Theta}^q} \\
\overbrace{-P_+}^{(H^2)^n} & \overbrace{T_{{\Theta}^*}}^{(H^q)^n} \\
\end{pmatrix} T_{\mathcal{G}} \begin{pmatrix}
\overbrace{P_+}^{(H^2)^n} & \overbrace{0}^{(H^q)^n \to (H^2)^n} \\
\overbrace{ -T_{{\Theta}^*}T_G + T_{{\Theta}^*}P_+ }^{(H^2)^n \to (H^q)^n} & \overbrace{P_+}^{(H^q)^n} \\
\end{pmatrix},
\end{equation}

\noindent where $T_{\mathcal{G}}$ is defined as in \eqref{T_G}. In the above, we label the second factor as $T_1$ and the final factor as $T_2$.

\begin{enumerate}
    \item The first factor, $T$, is invertible with inverse given by 
    $$
    \begin{pmatrix}
\overbrace{I + P_{\Theta}T_G Q_{\Theta} }^{K_{\Theta}^{q} + \Theta (H^2)^n} & \overbrace{0}^{(H^q)^n \to \{0\}} \\
\overbrace{0}^{K_{\Theta}^q + \Theta (H^2)^n \to \{0\}} & \overbrace{P_+}^{ (H^q)^n} \\
\end{pmatrix}.
    $$
    This is verified by Lemma \ref{first}.
    \item Adapting Lemma 6.4 from \cite{MR3634513} one can show the second factor, $T_1$, is invertible as a map $\begin{pmatrix}
    (H^2)^n \\
    (H^q)^n
    \end{pmatrix} \to \begin{pmatrix}
    K_{\Theta}^q + \Theta (H^2)^n \\
    (H^q)^n
    \end{pmatrix}$. 
    \item Adapting Lemma 6.5 from \cite{MR3634513} one can show the last factor, $T_2$, is invertible in $ \begin{pmatrix}
    (H^2)^n \\
    (H^q)^n
    \end{pmatrix} $.
\end{enumerate}


\noindent We can now conclude the following;
\begin{thm}\label{maineae}
$T_{\mathcal{G}}$ is equivalent after extension to $\Tilde{A_{G}^{\Theta}}$. 
\end{thm}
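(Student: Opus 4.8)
The plan is to assemble the chain of equivalences that has already been prepared in the displayed factorisations above, using that $\overset{*}{\backsim}$ is an equivalence relation and is therefore transitive. Relation \eqref{a} gives $\Tilde{A_G^{\Theta}} \overset{*}{\backsim} P_{\Theta,q}G P_{\Theta,2} + Q_{\Theta,2}$, and relation \eqref{b} enlarges domain and codomain by a copy of $P_{q+}$ to produce the block operator that is EAE to $P_{\Theta,q}G P_{\Theta,2} + Q_{\Theta,2}$. Hence it suffices to link this block operator to $T_{\mathcal{G}}$, and for this I would invoke the factorisation \eqref{c}.

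First I would record that \eqref{c} expresses the block operator appearing in \eqref{b} as the product $T\, T_1\, T_{\mathcal{G}}\, T_2$. By Lemma \ref{first} the factor $T$ is invertible (with the stated inverse), while the invertibility of the factor $T_1$ on the mixed $H^2$--$H^q$ spaces follows from the adaptation of Lemma 6.4 of \cite{MR3634513}, and the invertibility of $T_2$ from the adaptation of Lemma 6.5. Consequently the block operator equals $(T T_1)\, T_{\mathcal{G}}\, T_2$ with $T T_1$ and $T_2$ both invertible, so it is (algebraically and topologically) equivalent to $T_{\mathcal{G}}$ in the sense defined at the start of this section.

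Next I would observe that equivalence is precisely the special case of equivalence after extension in which the extension spaces $X_0, Y_0$ are taken to be trivial; thus the block operator is EAE to $T_{\mathcal{G}}$. Chaining the three links, namely \eqref{a}, the relation in \eqref{b}, and the equivalence just obtained from \eqref{c}, by transitivity of $\overset{*}{\backsim}$ then yields $\Tilde{A_G^{\Theta}} \overset{*}{\backsim} T_{\mathcal{G}}$, which is the assertion of the theorem.

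The real obstacle is not in this final bookkeeping but in the compatibility of domains and codomains across the factors of \eqref{c}. Because $G$ is merely in $L^{(p, n\times n)}$ and not bounded, $\Tilde{A_G^{\Theta}}$ genuinely maps $K_{\Theta}^2$ into the \emph{different} space $K_{\Theta}^q$, so each intermediate operator in the product must be verified to carry compatible mixed $H^2$--$H^q$ domain and codomain in order that the composition be well defined and that the invertibility claims for $T_1$ and $T_2$ continue to hold. Once one confirms that the annotated domains written above each factor chain correctly, the equalities in \eqref{c} hold on the nose and the theorem follows at once.
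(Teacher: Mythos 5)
Your proposal is correct and follows essentially the same route as the paper: it chains \eqref{a}, \eqref{b}, and the factorisation \eqref{c} with the invertibility of $T$, $T_1$ and $T_2$, then concludes by transitivity of $\overset{*}{\backsim}$. Your added remarks --- that equivalence with invertible outer factors is the special case of EAE with trivial extension spaces, and that the mixed $H^2$--$H^q$ domains and codomains must chain correctly --- are exactly the points the paper relies on implicitly.
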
 
\begin{proof}
Using \eqref{a}, \eqref{b} and the fact that $\overset{*}{\backsim}$ is an equivalence relation, we see that 
$$
\Tilde{A_{G}^{\Theta}} \overset{*}{\backsim} \begin{pmatrix}
P_{\Theta,q}G P_{\Theta,2} + Q_{\Theta,2} & 0 \\
0 & P_{q+} \\
\end{pmatrix}.
$$
Now \eqref{c} and the reasoning immediately following \eqref{c} shows 
$$
\begin{pmatrix}
P_{\Theta,q}G P_{\Theta,2} + Q_{\Theta,2} & 0 \\
0 & P_{q+} \\
\end{pmatrix} \overset{*}{\backsim} T_{\mathcal{G}}
$$
and so transitivity of $\overset{*}{\backsim}$ gives us 
$$
\Tilde{A_{G}^{\Theta}} \overset{*}{\backsim} T_{\mathcal{G}}.
$$

\end{proof}

\begin{rem}
In the case when $n=1$ and $p = \infty$, Theorem \ref{maineae} specialises to become (the symmetric case of) Theorem 6.6 in \cite{MR3634513}.
\end{rem}

\noindent When $G$ is bounded we have $\tilde{A_G^{\Theta}} = A_G^{\Theta}$, so we may specialise Theorem \ref{maineae} to find an operator which is EAE to $A_G^{\Theta}$ when $G$ is bounded.

\begin{thm}
Let $G \in L^{( \infty, n \times n)}$. Then $T_{\mathcal{G}}: (H^2)^{2n} \to (H^2)^{2n}$ is equivalent after extension to ${A_{G}^{\Theta}}$. 
\end{thm}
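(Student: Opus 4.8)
The plan is to obtain this as a direct specialisation of Theorem \ref{maineae} to the endpoint case $p = \infty$. First I would observe that the hypothesis $G \in L^{(\infty, n \times n)}$ corresponds to $p = \infty$ in the standing assumption of this section, and the relation $\frac{1}{2} + \frac{1}{p} = \frac{1}{q}$ then forces $\frac{1}{q} = \frac{1}{2}$, i.e. $q = 2$. Consequently $(H^q)^n = (H^2)^n$ and $K_{\Theta}^q = K_{\Theta}^2$, so the domain and codomain $\begin{pmatrix} (H^2)^n \\ (H^q)^n \end{pmatrix}$ of $T_{\mathcal{G}}$ both collapse to $(H^2)^{2n}$, which is exactly the identification asserted in the statement.

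Next I would identify $\tilde{A_G^{\Theta}}$ with $A_G^{\Theta}$. Since $G$ is bounded, the map \eqref{MTTO} always extends to a bounded operator (as noted in the introduction), so $A_G^{\Theta}$ is a genuine MTTO and the hypotheses of Proposition \ref{sameimg} are satisfied. That proposition gives $\tilde{A_G^{\Theta}}(f) = A_G^{\Theta}(f)$ for every $f \in K_{\Theta}^2$. Because $q = 2$, the codomain of $\tilde{A_G^{\Theta}}$ is $K_{\Theta}^q = K_{\Theta}^2$, which coincides with the codomain of $A_G^{\Theta}$; hence $\tilde{A_G^{\Theta}}$ and $A_G^{\Theta}$ are literally the same operator, not merely operators that agree up to a change of codomain.

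Finally I would invoke Theorem \ref{maineae}, which asserts $T_{\mathcal{G}} \overset{*}{\backsim} \tilde{A_G^{\Theta}}$ for every $p \in (2, \infty]$ and $G \in L^{(p, n \times n)}$. Applying it with $p = \infty$ and substituting the equality $\tilde{A_G^{\Theta}} = A_G^{\Theta}$ established above yields $T_{\mathcal{G}} \overset{*}{\backsim} A_G^{\Theta}$, which is the claim.

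The hard part here is essentially nonexistent, since the result is a pure specialisation; the proof is a one-line deduction once Theorem \ref{maineae} is in hand. The only point I would be careful to state explicitly is that the identification $\tilde{A_G^{\Theta}} = A_G^{\Theta}$ is an \emph{honest} equality of operators (same domain, same codomain, same action) precisely because $q = 2$ makes the two target spaces coincide. For general $p \in (2, \infty)$ the two operators only agree after a change of codomain, and this is exactly why Theorem \ref{maineae} had to be phrased in terms of $\tilde{A_G^{\Theta}}$ rather than $A_G^{\Theta}$ in the first place.
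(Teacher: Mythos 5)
Your proposal is correct and matches the paper's own argument exactly: the paper likewise notes that boundedness of $G$ forces $q=2$, so that $\tilde{A_G^{\Theta}} = A_G^{\Theta}$ and the spaces collapse to $(H^2)^{2n}$, and then specialises Theorem \ref{maineae}. Your explicit remark that the identification is an honest equality of operators (rather than agreement up to a change of codomain) is a worthwhile clarification, but it is the same proof.
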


As operators which are EAE have isomorphic kernels and cokernels, Theorem \ref{maineae} and Proposition \ref{sameimg} suggest that restricting the codomain of $T_{\mathcal{G}}$ may provide an operator which is EAE to $A_G^{\Theta}$, where $G \in L^{( p, n \times n)}$, for $p \in (2, \infty)$. We now pursue this idea.

\vskip 0.4cm

\noindent Throughout the remainder of this section we now continue to assume that $G \in L^{( p, n \times n)}$ where $p \in (2 , \infty]$, but we now we also make the extra assumption that ${A_{G}^{\Theta}}$ is a MTTO (and hence bounded).

\vskip 0.4cm

\noindent The image of $T_{\mathcal{G}}$ is computed to be
$$
\begin{pmatrix}
0 \\
\Theta (H^q)^n \\
\end{pmatrix} +
\begin{pmatrix}
0 \\
P_{q+} ( G K_{\Theta}^2) \\
\end{pmatrix} +
\left\{    \begin{pmatrix}
f \\
P_{q+} (G\Theta f) \\
\end{pmatrix} : f \in (H^2)^n \right\},
$$
where for $A \subseteq (L^q)^n$, $\begin{pmatrix}
0 \\
A \\
\end{pmatrix} $ is the set of all vectors of length $2n$ with the last $n$ coordinates taking a value $a \in A$. We now define the Banach space
\begin{equation}\label{co-d}
    \text{Co-d} := \begin{pmatrix}
0 \\
\Theta (H^q)^n \\
\end{pmatrix} +
\begin{pmatrix}
0 \\
K_{\Theta}^2 \\
\end{pmatrix} +
\left\{    \begin{pmatrix}
f \\
P_{q+} (G\Theta f) \\
\end{pmatrix} : f \in (H^2)^n \right\},
\end{equation}
where for $p_1 \in (H^q)^n, p_2 \in K_{\Theta}^2, p_3 \in (H^2)^n$ we have the well defined norm
$$
\left| \left| \begin{pmatrix}
0 \\
\Theta p_1 \\
\end{pmatrix} +
\begin{pmatrix}
0 \\
p_2 \\
\end{pmatrix} + 
\begin{pmatrix}
p_3 \\
P_{q+} (G\Theta p_3) \\
\end{pmatrix} \right| \right|_{\text{Co-d}} := || \Theta p_1 ||_{(H^q)^n} + ||p_2||_{K_{\Theta}^2} + ||p_3||_{(H^2)^n} .
$$
We note that completeness of each of the spaces $(H^q)^n , K_{\Theta}^2$ and $(H^2)^n$ ensures completeness of $\text{Co-d}$. Corollary \ref{containedinL2} ensures that $P_{q+} ( G K_{\Theta}^2) \subseteq K_{\Theta}^2 + \Theta (H^q)^n$ so this gives us a well defined bounded map
$$
T_{\mathcal{G}}^r :\begin{pmatrix}
(H^2)^n \\
(H^q)^n
\end{pmatrix} \to \text{Co-d},
$$ where for $ f_1 \in (H^2)^n $ and $f_2 \in (H^q)^n$
$$
\begin{pmatrix}
f_1 \\
f_2
\end{pmatrix} \mapsto 
\begin{pmatrix}
P_{2+}({\Theta}^* f_1) \\
P_{q+}( G f_1 + \Theta f_2 )
\end{pmatrix} = T_{\mathcal{G}}\begin{pmatrix}
f_1 \\
f_2
\end{pmatrix}.
$$
\begin{rem}
In the case when $p = \infty$ and so $q = 2$, as sets we have $\text{Co-d} = \begin{pmatrix}
(H^2)^n \\
(H^2)^n
\end{pmatrix}$ and furthermore the $\text{Co-d}$ norm is equivalent to the $\begin{pmatrix}
(H^2)^n \\
(H^2)^n
\end{pmatrix}$ norm.
\end{rem}

Similar to the proof of Theorem \ref{maineae}, we can show that 
$$
{A_{G}^{\Theta}} \overset{*}{\backsim} \begin{pmatrix}
\overbrace{P_{\Theta,q}G P_{\Theta,2} + Q_{\Theta,2}}^{(H^2)^n \to K_{\Theta}^2 + \Theta (H^2)^n} & \overbrace{0}^{(H^q)^n \to K_{\Theta}^q + \Theta (H^2)^n} \\
\overbrace{0}^{(H^2)^n \to (H^q)^n} & \overbrace{P_{q+}}^{(H^q)^n} \\
\end{pmatrix},
$$
where we know by Corollary \ref{containedinL2} that $P_{\Theta,q} (G K_{\Theta}^2) \subseteq K_{\Theta}^2$. It is also clear that for $\begin{pmatrix}
f_1 \\
f_2
\end{pmatrix}  \in 
\begin{pmatrix}
(H^2)^n \\
(H^q)^n
\end{pmatrix}$ using \eqref{c} we still have 
$$
T T_1 T_{\mathcal{G}}^r T_2 \begin{pmatrix}
f_1 \\
f_2
\end{pmatrix}  = \begin{pmatrix}
\overbrace{P_{\Theta,q}G P_{\Theta,2} + Q_{\Theta,2}}^{(H^2)^n \to K_{\Theta}^2 + \Theta (H^2)^n} & \overbrace{0}^{(H^q)^n \to K_{\Theta}^q + \Theta (H^2)^n} \\
\overbrace{0}^{(H^2)^n \to (H^q)^n} & \overbrace{P_{q+}}^{(H^q)^n} \\
\end{pmatrix} \begin{pmatrix}
f_1 \\
f_2
\end{pmatrix} .
$$
One can also check that the operator $T T_1 : \text{Co-d} \to \begin{pmatrix}
    (H^2)^n \\
    (H^q)^n
    \end{pmatrix}$ is well defined, bounded and invertible. We know from an adaptation of Lemma 6.5 in \cite{MR3634513} that $T_2: \begin{pmatrix}
    (H^2)^n \\
    (H^q)^n
    \end{pmatrix} \to \begin{pmatrix}
    (H^2)^n \\
    (H^q)^n
    \end{pmatrix}$ is invertible. So we can conclude;
\begin{thm}
${A_{G}^{\Theta}} \overset{*}{\backsim} T_{\mathcal{G}}^r$.
\end{thm}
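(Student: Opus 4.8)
The plan is to assemble the claim from the equivalence already displayed just above and the factorisation read off from \eqref{c}, and then invoke transitivity of $\overset{*}{\backsim}$; essentially all of the analytic content has been carried out in the preceding discussion, so the proof itself is a short synthesis rather than a fresh computation.

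First I would pin down the two ingredients that are already in hand. Running the argument of Theorem \ref{maineae} once more, but now keeping the image of the first diagonal block inside $K_{\Theta}^2 + \Theta (H^2)^n$ (which is legitimate precisely because Corollary \ref{containedinL2} gives $P_{\Theta,q}(G K_{\Theta}^2) \subseteq K_{\Theta}^2$), produces the equivalence $A_G^{\Theta} \overset{*}{\backsim} M$, where $M$ denotes the $2\times 2$ block operator displayed immediately before the statement. Here one uses that $K_{\Theta}^2 + \Theta(H^2)^n = (H^2)^n$ as a set, so that the two descriptions of the codomain of $M$ coincide. Secondly, the operator identity written just after \eqref{c} is exactly $M = (T T_1)\, T_{\mathcal{G}}^r\, T_2$ on $\begin{pmatrix}(H^2)^n \\ (H^q)^n\end{pmatrix}$.

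Next I would read off an \emph{exact} equivalence from this factorisation. Put $E := T T_1$ and $F := T_2$. The factor $F$ is invertible by the adaptation of Lemma 6.5 of \cite{MR3634513}, and $E$, regarded as a map $\text{Co-d} \to \begin{pmatrix}(H^2)^n \\ (H^q)^n\end{pmatrix}$, is bounded and invertible as recorded above. Hence $M = E\, T_{\mathcal{G}}^r\, F$ with $E, F$ invertible exhibits $M$ and $T_{\mathcal{G}}^r$ as (algebraically and topologically) equivalent in the sense of the definition opening this section. Since equivalence is the special case of equivalence after extension in which both extension spaces $X_0, Y_0$ are taken to be $\{0\}$, this already gives $M \overset{*}{\backsim} T_{\mathcal{G}}^r$. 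Combining with $A_G^{\Theta} \overset{*}{\backsim} M$ and using that $\overset{*}{\backsim}$ is an equivalence relation, transitivity yields $A_G^{\Theta} \overset{*}{\backsim} T_{\mathcal{G}}^r$, as required.

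The genuinely delicate step, and the one I expect to be the main obstacle, is justifying that $E = T T_1 : \text{Co-d} \to \begin{pmatrix}(H^2)^n \\ (H^q)^n\end{pmatrix}$ is well defined, bounded, and boundedly invertible. The difficulty is that $\text{Co-d}$ is not a standard Hardy-type space but carries the mixed norm $\|\Theta p_1\|_{(H^q)^n} + \|p_2\|_{K_{\Theta}^2} + \|p_3\|_{(H^2)^n}$, so one must check that sharpening the codomain of $T T_1$ from the ambient $(L^q)^n$-type norm to this finer norm preserves both continuity and the two-sided inverse. This is exactly where the containment $P_{\Theta,q}(G K_{\Theta}^2) \subseteq K_{\Theta}^2$ from Corollary \ref{containedinL2} is indispensable, since it is what makes the $K_{\Theta}^2$-summand of $\text{Co-d}$ (rather than a merely $K_{\Theta}^q$-summand) the correct target. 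Once this and the invertibility of $T_2$ are secured, the resulting equivalence and the final transitivity argument are routine.
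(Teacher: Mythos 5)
Your proposal is correct and follows essentially the same route as the paper: establish $A_G^{\Theta} \overset{*}{\backsim} M$ by rerunning the Theorem \ref{maineae} argument with the codomain tightened via Corollary \ref{containedinL2}, read off $M = (TT_1)\,T_{\mathcal{G}}^r\,T_2$ from \eqref{c} with both outer factors invertible (the paper likewise only asserts, rather than details, the invertibility of $TT_1$ on $\text{Co-d}$), and conclude by transitivity. Your explicit remark that plain equivalence is the trivial-extension case of $\overset{*}{\backsim}$ is left implicit in the paper but is exactly the intended reasoning.
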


\section{Application to integral equations}\label{integralequation}
In this section we concretely relate the theory of MTTOs to integral equations. We show the matricial truncated Wiener-Hopf operator is unitarily equivalent to a MTTO and give a neat application of the theory we have developed around the modified MTTO which allows us to test continuity of the matricial truncated Wiener-Hopf operator by considering only the image of a modified MTTO.

We consider $H^2 ( \mathbb{C}_+ )$, the Hardy space of the upper half plane. Section 3 of \cite{MR3634513} shows there is an isometric isomorphism between the Hardy space on the disc and the Hardy space on the half plane, which we denote $V: H^2 \to H^2(\mathbb{C}_+ )$. It further shows that via the unitary map $V$, truncated Toeplitz operators in the two different settings are unitarily equivalent. The functions $\theta_1 ( w) = e^{iaw}$ and $\theta_2 ( w) = e^{ibw}$ are inner functions in $H^2 ( \mathbb{C}_+ )$ and it is well known that the inverse Fourier transform of $L^2 (0, a)$, $L^2 (0, b)$ is $K_{\theta_1}^2 $, $K_{\theta_2}^2$ respectively. We denote $\mathcal{F}: H^2(\mathbb{C}_+ ) \to L^2(0, \infty )$ to be the Fourier transform. The Fourier transform of a matrix is understood by taking the Fourier transform of each entry of the matrix.

MTTOs on the model space $K_{\Theta}^2$ where $\Theta = \begin{pmatrix}
\theta_1 & 0 \\
0 & \theta_2 \\
\end{pmatrix}$, are closely connected with matricial truncated Wiener-Hopf operators. For $c \in \mathbb{R}_+$ and $f = \begin{pmatrix}
f_1 \\
f_2
\end{pmatrix} $ where $f_1, f_2 \in L^1( \mathbb{R} )$, we denote $\text{int}_c^1 (f) = \int_0^c f_1(t) dt$ and $\text{int}_c^2 (f) = \int_0^c f_2(t) dt$. Let $G = \begin{pmatrix}
g_{11} & g_{12} \\
g_{21} & g_{22}
\end{pmatrix} \in L^1 ( \mathbb{R} )^{2 \times 2}$, let $k = \begin{pmatrix}
k_1 \\
k_2
\end{pmatrix}$ where $k_1 \in L^2 (0, a)$ and $k_2 \in L^2 (0, b)$ for $a,b \in \mathbb{R}_+$. The matricial truncated Wiener-Hopf operator is an operator on $\begin{pmatrix}
L^2(0,a) \\
L^2(0,b)
\end{pmatrix}$ (equipped with the product norm) densely defined by
\begin{equation}\label{MTWH}
    \left( W_G 
\begin{pmatrix}
k_1 \\
k_2
\end{pmatrix} \right) (x) = \begin{pmatrix}
\text{int}_a^1 (G(x-t) k(t)) \\
\text{int}_b^2 (G(x-t) k(t))
\end{pmatrix} ,
\end{equation}
for $k_1 \in L^2 (0, a) \cap L^{\infty} (0, a)$, $k_2 \in L^2 (0, b) \cap L^{\infty} (0, b)$. We mostly only consider the case when $a=b$. In this case
$$
\left( W_G 
\begin{pmatrix}
k_1 \\
k_2
\end{pmatrix} \right) (x) = \int_0^a G(x-t) k(t) dt .
$$

If $W$ extends to a bounded operator and if $G = \hat{H}$, where $H \mathcal{F}^{-1}\begin{pmatrix}
L^{\infty}(0,a) \\
L^{\infty}(0,b)
\end{pmatrix} \subseteq (L^2(\mathbb{R}))^2$ (this condition on $H$ is necessary to densely define a MTTO with symbol $H$ on the domain $\mathcal{F}^{-1}\begin{pmatrix}
L^{\infty}(0,a) \\
L^{\infty}(0,b)
\end{pmatrix})$  we have 
$$
W_G (k) = \mathcal{F} P_{\Theta} (H \check{k} ),
$$
for $k_1 \in L^2 (0, a) \cap L^{\infty} (0, a)$, $k_2 \in L^2 (0, b) \cap L^{\infty} (0, b)$. Thus, in this case $W_G$ is unitarily equivalent to the MTTO on the half plane $A_{H}^{\Theta}$, where we adopt the convention that $A_{H}^{\Theta}$ is initially densely defined on $\mathcal{F}^{-1}\begin{pmatrix}
L^{\infty}(0,a) \\
L^{\infty}(0,b)
\end{pmatrix}$.

A computation shows that $V^{-1} \mathcal{F}^{-1} L^{\infty} (0, a)$ is bounded and lies in the model space $K_{u}^2$, where $u = e^{ia \frac{1-z}{1+z}}$ is an inner function on the disc. As $V$ and $\mathcal{F}$ are isomorphic and $L^{\infty}(0,a)$ is dense in $L^2(0,a)$, we must have $V^{-1} \mathcal{F}^{-1} L^{\infty} (0, a)$ is contained in $K_u^{\infty}$ and is dense subspace of $K_u^2$. A similar reasoning holds for $L^{\infty}(0,b)$. If we again assume that $G = \hat{H}$, where $H \mathcal{F}^{-1}\begin{pmatrix}
L^{\infty}(0,a) \\
L^{\infty}(0,b)
\end{pmatrix} \subseteq (L^2(\mathbb{R}))^2$ then we can deduce that $W_G$ is bounded if and only if $A_{H}^{\Theta}$ densely defined on $\mathcal{F}^{-1} \begin{pmatrix}
L^{\infty}(0,a) \\
L^{\infty}(0,b)
\end{pmatrix}$  is bounded if and only if the corresponding map on the disc, given by $A_{H  \circ m}^{\Theta \circ m } $ where $m: \mathbb{D} \to \mathbb{C}_+$ is defined by $m(z)=i\left(\frac{1-z}{1+z}\right)$ is bounded, i.e. if $A_{H  \circ m}^{\Theta \circ m } $ is an MTTO. Now with the above reasoning and Proposition \ref{boundedmttocondition}, whenever $G = \hat{H}$ with $H \mathcal{F}^{-1}\begin{pmatrix}
L^{\infty}(0,a) \\
L^{\infty}(0,b)
\end{pmatrix} \subseteq (L^2(\mathbb{R}))^2$ and $H  \circ m \in L^{( p, n \times n)}$ for $p \in ( 2, \infty ]$, we can test the continuity of \eqref{MTWH} by considering only the image of a corresponding modified MTTO.

In the case when $H \mathcal{F}^{-1}\begin{pmatrix}
L^{\infty}(0,a) \\
L^{\infty}(0,b)
\end{pmatrix} \subseteq (L^2(\mathbb{R}))^2$ and $W_G$ is bounded we have that $W_G$ is unitarily equivalent to $A_{H  \circ m}^{\Theta \circ m }  $ via the unitary mapping $ \mathcal{F} \circ V$.

Although the above demonstration is in the case of the 2-by-2 matrix $G$, it is easily generalised to the $n$-by-$n$ case. Matricial truncated Wiener-Hopf operators (which may also be called matricial convolution operators on finite intervals) are studied in detail in Chapter 7 of \cite{convolutionbook}. They are then shown to play an important role in Chapter 8 of \cite{convolutionbook}, where a continuous analogue of Krein's Theorem for matrix polynomials is given.

Matricial truncated Wiener-Hopf operators are also encountered naturally when finding the solution to MIMO (multi input, multi output) linear systems. Consider a matrix indicator function $\mathbbm{1}_{\mathbb{R}_-}$, where $\mathbbm{1}_{\mathbb{R}_-} (x) = I_{2 \times 2}$ if $ x \leqslant 0$ and $\mathbbm{1}_{\mathbb{R}_-} (x) = 0_{2 \times 2}$ if $ x > 0$, and set $G^'(x) = \exp (Ax) \mathbbm{1}_{\mathbb{R}_-}(x)  B$, where $A$ and $B$ are constant $2$-by-$2$ matrices. If we set $a=b$, then $ x \mapsto W_{G^'} (u) (x) + \exp (A x) v_{0}$ is the solution found for $v$ when solving the system of MIMO state space equations given by
\begin{align}
\dot{v} &=A v+B u, & v(0)=v_{0}, \\
y &=C v+D u, & &
\end{align}
where $u(x), v(x) ,y(x) \in \mathbb{C}^2$ are defined for all $x \in  (0,a)$, additionally $v$ is defined at 0 with the condition $v(0) = v_0$, $C$ and $D$ are constant $2$-by-$2$ matrices and $u \in (L^2(0, a))^2$. See Chapter 8 of \cite{JRPiis}, and in particular the solution of equation 8.1 for a more detailed discussion. The above working is easily generalised to the $n$-by-$n$ case. MIMO systems are encountered in control theory, dynamical systems, electrical engineering (see Chapter 5 of \cite{MIMOelectircal}) and find further applications in  wireless communication and multi-channel digital transmission.


\subsection*{Acknowledgments}
The author is grateful to the EPSRC for financial support. \newline
The author is grateful to Professor Partington for his valuable comments.

\subsection*{Declarations}

This research was supported by the EPSRC (grant number 1972662). \newline
\noindent Conflicts of interest/Competing interests- none.
\newline Availability of data and material- not applicable.
\newline Code availability- not applicable.
\newline Data sharing not applicable to this article as no datasets were generated or analysed during the current study.

\bibliographystyle{plain}
\bibliography{bibliography.bib}

\end{document}